\documentclass[reqno,11pt]{amsart}
\usepackage{euscript}
\usepackage{amssymb}
\usepackage{amscd}
\usepackage{amsmath}
\usepackage{epic}
\usepackage{graphics}
\usepackage{epsfig}
\usepackage{color}
\usepackage{setspace}
\usepackage[backref]{hyperref}

\numberwithin{equation}{section}
\setlength{\textwidth}{5.8in}
\setlength{\textheight}{8.2in}
\setlength{\oddsidemargin}{.2in}
\setlength{\evensidemargin}{.2in}
\setlength{\topmargin}{.1in}
\setlength{\headsep}{.3in}


\newtheoremstyle{my}{1.5em}{0.5em}{\em}{}{\sc}{.}{0.5em}{}
\newtheoremstyle{your}{1.5em}{0.5em}{}{}{\sc}{.}{0.5em}{}

\theoremstyle{my}

\theoremstyle{my}
\newtheorem{thm}{Theorem}[section]
\newtheorem{Theorem}[thm]{Theorem}
\newtheorem*{Theorem*}{Theorem}
\newtheorem{Corollary}[thm]{Corollary}

\newtheorem{cor}[thm]{Corollary}
\newtheorem*{corollary*}{Corollary}
\newtheorem{Lemma}[thm]{Lemma}

\newtheorem*{conjecture*}{Conjecture}

\newtheorem*{question*}{Question}

\newtheorem*{definitions*}{Definitions}

\newtheorem*{rem*}{Remark}

\newtheorem*{remark*}{Remark}

\newtheorem*{remarks*}{Remarks}
\newtheorem*{example*}{Example}
\newtheorem*{examples*}{Examples}

\newtheorem*{convention*}{Convention}
\newtheorem*{conventions*}{Conventions}
\newtheorem*{Note*}{Note}
\newtheorem*{exercise*}{Exercise}
\newtheorem*{bibliographical-note*}{Bibliographical note}

\theoremstyle{your}
\newtheorem{Remark}[thm]{Remark}

\newcommand{\Acknowledgements}{{\em Acknowledgements.} }


\newcommand{\R}{\mathbb{R}}
\newcommand{\Z}{\mathbb{Z}}
\newcommand{\Q}{\mathbb{Q}}
\newcommand{\C}{\mathbb{C}}

\newcommand{\pa}{\partial}


\newcommand{\ind}{\mathrm{ind}}

\newcommand{\rk}{\mathrm{rank}}
\renewcommand{\ker}{\mathrm{ker}}

\newcommand{\st}{\mathrm{st}}
\newcommand{\cu}{\mathrm{cu}}

\newcommand{\om}{\omega} 
\newcommand{\SI}{\mathrm{SI}}




\newcommand{\const}{\mathrm{const}}
\newcommand{\Int}{\operatorname{int}}

\newcommand{\p}{\partial }
\newcommand{\eps}{\epsilon}
\newcommand{\wt}{\widetilde}
\newcommand{\wh}{\widehat}

\parindent0em
\parskip1em


\title{Constructing exact Lagrangian immersions with few double points}

\author{Tobias Ekholm}
\thanks{T.E. is partially supported by Swedish Research Council Grant 2012-2365 and by the Knut and Alice Wallenberg Foundation as a Wallenberg Scholar.} 
\address{Tobias Ekholm, Department of Mathematics, Uppsala University, Box 480, 751 06 Uppsala, Sweden, and Institute Mittag-Leffler, Aurav 17, 182 60 Djursholm, Sweden}
\author{Yakov Eliashberg}
\thanks{Y.E. is partially supported by   NSF grant DMS-1205349}
\address{Yakov Eliashberg, Department of Mathematics, Stanford University, Stanford, CA 94305-2125 U.S.A.}
\author{Emmy Murphy}
\thanks{E.M. is partially supported by NSF grant DMS-0943787}
\address{Emmy Murphy, Department of Mathematics, Massachusetts Institute of Technology, 77 Massachusetts Ave.,  Cambridge, MA 02139  U.S.A.}
\author{Ivan Smith}
\thanks{I.S. is partially supported by grant ERC-2007-StG-205349 from the European Research Council.}
\address{Ivan Smith, Centre for Mathematical Sciences, University of Cambridge,  Wilberforce Road, Cambridge CB3 0WB, England.}

\date{March 2013}

\begin{document}
\thispagestyle{empty}
\setcounter{tocdepth}{1}

\begin{abstract}
We establish, as an application of the results from \cite{EliMur},   an $h$-principle for exact Lagrangian immersions with transverse self-intersections and the minimal, or  near-minimal number of double points. One corollary of our result is that any orientable closed $3$-manifold admits an exact Lagrangian immersion into standard symplectic $6$-space $\R^6_\st$ with exactly one transverse double point. Our construction  also yields a Lagrangian embedding  $S^1\times S^2\to\R^6_\st$ with vanishing Maslov class.
\end{abstract}

\maketitle

\section{Introduction}\label{Sec:intr}
\subsection*{Lagrangian self-intersections}
In this paper we study the problem of constructing Lagrangian immersions with the minimal possible number of transverse self-intersection points. It is well known that the existence of a Lagrangian embedding imposes strong topological constraints (e.g.~Gromov's theorem about non-existence of exact Lagrangian submanifolds in standard symplectic $2n$-space, $\R^{2n}_\st=(\R^{2n},\sum_{i=1}^{n} dx_i\wedge dy_i)$), and also that, in many cases, two Lagrangian submanifolds must intersect in more points than is suggested by topological intersection theory alone (e.g.~results confirming Arnold's conjectures). In view of this, it was expected that there should be similar lower bounds on the minimal number of double points for Lagrangian immersions, e.g.~that an exact Lagrangian immersion of an $n$-torus into $\R^{2n}_{\st}$ with transverse self-intersections would have at least $2^{n-1}$ double points.  

Bounds of this kind have been proved for Lagrangian immersions satisfying additional conditions.
For instance, it was shown in \cite{EESa,EESori} that any self-transverse exact Lagrangian immersion $f\colon L\to\R^{2n}_\st$ of a closed $n$-manifold, for which the Legendrian lift $\tilde f\colon L\to \R^{2n}_\st\times\R$  has Legendrian homology algebra that admits an augmentation, satisfies the following analogue of the Morse inequalities: the number of double points of $f$ is at least $\frac{1}{2}\sum_{j=0}^{n}\rk(H_{j}(L))$.

Here we prove a surprising result in sharp contrast to such lower bounds: if no extra constraints are imposed then Lagrangian immersions into symplectic manifolds of dimension $>4$ with \emph{nearly the minimal} number of self-intersection points satisfy a certain $h$-principle. Let us  introduce the  notation
needed to state the result. Let $X$ be an oriented  $2n$-manifold and $f\colon L\to X$   a  proper smooth immersion of a   connected $n$-manifold. If $L$ is non-compact we will assume that $f$ is an embedding outside of a compact set. Following Whitney \cite{Whitney}, if $f$ is self-transverse we assign a self-intersection number $I({f})$ to $f$, where $I({f})$ is the mod 2 number of self-intersection points if $n$ is odd  or $L$  is non-orientable, and the algebraic number of self-intersection points counted with their intersection signs if $n$ is even and $L$ is orientable. Note that $I(f)$ depends only on the regular homotopy class of $f$ provided that $L$ is closed or that the regular homotopy is an isotopy at infinity. When $n>2$  and $X$ is simply connected, a theorem of Whitney \cite{Whitney} asserts that  an immersion $f$ is regularly homotopic to an embedding if and only if $I(f)=0$.
If $f\colon L\to X$ is a self-transverse immersion, then we let $\SI(f)\ge 0$ denote its total number of double points. Clearly, $\SI(f)\geq |I(f)|$. 

Similarly, given an orientable $(2n-1)$-dimensional manifold $Y$ and an $(n-1)$-dimensional manifold $\Lambda$, consider a smooth regular homotopy $h_t\colon\Lambda\to Y$, $0\le t\le 1$, which  connects embeddings $h_0$ and $h_1$ and  is an isotopy outside of a compact set. Let $H\colon \Lambda\times[0,1]\to Y\times[0,1]$ be given by  $H(x,t)=(h_t(x), t)$; then $H$ is an immersion. We say that the regular homotopy $h_t$ has transverse intersections if the immersion $H$ has transverse double points. In this case we define $I(\{h_t\}_{t\in[0,1]}):=I(H)$ and $\SI(\{h_t\}_{t\in[0,1]}):=\SI(H)$. Note that if $n$ is even and $\Lambda$ is orientable then $I(\{h_t\}_{t\in[0,1]})=-I(\{h_{1-t}\}_{t\in[0,1]})$.

 Consider next a Lagrangian regular homotopy, $f_t\colon L\to X$, $0\le t\le 1$, and write $F\colon L\times[0,1]\to X$ for $F(x,t)=f_t(x)$. Let $\alpha$ denote the 1-form on $L\times[0,1]$ defined by the equation $\alpha:=\iota_{\pa/\pa t}(F^*\omega)$, where $\iota$ denotes contraction and $t$ is the coordinate on the second factor of $L\times[0,1]$. Then the restrictions $\alpha_t:=\alpha|_{L\times \{t\}}$ are closed for all $t$. We call the Lagrangian regular homotopy $f_t$ a \emph{Hamiltonian regular homotopy} if the cohomology class $[\alpha_t]\in H^1(L)$ is independent of $t$. The following theorem is our main result:

\begin{thm}\label{thm:min-intersect}
Suppose that $X$ is a simply connected $2n$-dimensional symplectic manifold, $n>2$. If $n=3$ we assume further that $X$ has infinite Gromov width (that is, it admits a symplectic embedding of the standard ball $B^6(R)$ for any large $R$). If $f\colon L\to X$ is a Lagrangian immersion then there exists a Hamiltonian regular homotopy $f_t\colon L\to X$, $0\le t\le 1$, with $f_0=f$ such that $f_1$ is self-transverse and 
\[
\SI(f_1) =
\begin{cases}  
1,&  \text{if $n$ is odd or $L$ is non-orientable and $I(f_0)=1$};\\
2,&  \text{if $n$ is odd or $L$ is non-orientable and $I(f_0)=0$};\\
|I(f_0)|,& \text{if $n$ is even,  $L$ is orientable and $(-1)^{\frac n2}I({f_0})<0$};\\
|I(f_0)|+2,& \text{if $n$ is even,  $L$ is orientable and $(-1)^{\frac n2}I({f_0})\ge 0$.}
\end{cases}
\]
\end{thm}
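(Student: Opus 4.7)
The plan is to reach the stated count by an iterated use of two elementary local moves --- creation and cancellation of a pair of transverse Lagrangian double points --- executed inside a Darboux ball, with the existence of the cancellation move being the technical heart of the argument and the only place where the hypothesis $n>2$ (and the large Gromov width assumption when $n=3$) and the input from \cite{EliMur} are used.

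I would first carry out a preparatory reduction. By Gromov's $h$-principle for Lagrangian immersions together with Weinstein's neighbourhood theorem, any Lagrangian immersion $f$ is Hamiltonianly regularly homotopic to one whose self-intersections are all concentrated in a single Darboux ball $B\subset X$ and look, near each double point, like two linear Lagrangian planes meeting transversally at the origin of $\R^{2n}_\st$. At this stage the number $\SI(f)$ may be large; what matters is that $I(f)$ is unchanged (it is a regular homotopy invariant) and that the whole problem is now local, taking place inside $B$ and involving only Hamiltonian regular homotopies supported in a slightly larger ball.

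Next I would establish the two local moves. A \emph{creation} move, built from the standard Whitney sphere in $\R^{2n}_\st$ and already well known, inserts a pair of double points with opposite Whitney signs (when $n$ is even and $L$ orientable) and raises $\SI$ by $2$ while leaving $I$ unchanged. A \emph{cancellation} move removes a pair of double points with opposite Whitney signs --- but here the Lagrangian setting forces a chirality: the pair that can be removed by Lagrangian surgery along a Lagrangian Whitney $1$-handle has a specific sign pattern controlled by $(-1)^{n/2}$. The existence of the required Lagrangian Whitney handle, with the cohomological data needed so that the resulting regular homotopy is \emph{Hamiltonian} (i.e.\ so that $[\alpha_t]$ is preserved), is where the main difficulty lies. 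To construct it I would lift the configuration of the two branches near the two double points to Legendrian arcs in the contactisation $B\times \R$, formulate the desired surgery as the existence of a Legendrian disc with prescribed formal data and a loose chart, and invoke the $h$-principle for loose Legendrian embeddings of \cite{EliMur}. The condition $n>2$ ensures both the codimension needed to produce a loose chart and the applicability of Whitney's embedding $h$-principle used for the formal data; the large Gromov width hypothesis when $n=3$ gives enough room in the Darboux ball to realise a loose chart without creating new intersections.

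With both moves in hand, the proof finishes by sign bookkeeping. In the $\Z$-valued case (even $n$, orientable $L$), cancellations only remove pairs of Whitney sign $(+,-)$ with the preferred orientation dictated by $(-1)^{n/2}$; so if $(-1)^{n/2}I(f)<0$ one can repeatedly cancel all removable pairs to land at exactly $\SI=|I(f)|$, while if $(-1)^{n/2}I(f)\ge0$ one must first apply a creation move (adding two oppositely oriented points, which shifts the usable sign pattern) and then cancel, arriving at $|I(f)|+2$. In the $\Z/2$ case (odd $n$ or $L$ non-orientable) there is no sign obstruction but each cancellation removes points in pairs, so starting from any representative and cancelling as far as possible one is left with $1$ if $I(f)=1$ and with $2$ if $I(f)=0$. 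The main obstacle throughout is Step~2: constructing the Lagrangian surgery handle so that it simultaneously has the right sign to actually cancel the pair, the right cohomological class to preserve the Hamiltonian regular homotopy condition, and is realisable via the loose Legendrian $h$-principle of \cite{EliMur}; once that is done the remainder of the argument is combinatorial.
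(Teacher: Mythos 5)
Your proposal has a genuine gap at its central step, and also takes a different conceptual route from the paper.  The paper does \emph{not} proceed by repeatedly cancelling pairs of transverse double points inside a Darboux ball; the ``local Lagrangian Whitney trick'' you posit as the technical heart is not a theorem, and you do not establish it.  Both of your preparatory assertions --- that any Lagrangian immersion is Hamiltonianly regular homotopic to one with all double points concentrated in a single Darboux ball and an embedded complement, and that inside that ball oppositely-signed pairs of prescribed chirality can be cancelled by a Hamiltonian isotopy --- are essentially as strong as the theorem itself and are asserted without proof.  Moreover you do not say where the \emph{loose chart} needed to run any Murphy-type $h$-principle would come from: two transverse Lagrangian planes meeting at a point lift to a Legendrian with a single Reeb chord, and nothing in that local picture is loose.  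You also attribute the ``$h$-principle for loose Legendrian embeddings'' to \cite{EliMur}; that statement is Theorem~\ref{prop:Murphy} from \cite{Mur}, while the result of \cite{EliMur} used in the paper is the Lagrangian caps $h$-principle (Theorem~\ref{thm:caps-embedded}), a different and more global statement.

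The actual proof instead introduces a single conical singularity at a \emph{simple} point of $f_0$: viewing a simple point as a cone over the trivial Legendrian sphere, Lemma~\ref{lm:near-cone} performs a stabilization of that Legendrian inside a small ball, which simultaneously (i) makes the Legendrian link of the conical point \emph{loose} (this is where looseness is manufactured) and (ii) creates exactly one transverse double point whose Whitney sign is forced to be $(-1)^{k-1}$ when $n=2k$.  This single enforced sign is the source of the chirality and of the $+2$ discrepancy in the even, same-sign case; it is not a constraint on which pairs can be cancelled.  One then replaces the ball by a cone over the loose link and applies Theorem~\ref{cor:caps-immersed} to the complement, which is fixed near the conical point.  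Theorem~\ref{cor:caps-immersed} is itself proved by downward induction on $|I|$, each inductive step again using Lemma~\ref{lm:near-cone} to shed one unit of Whitney index near the cone, with the Lagrangian caps theorem of \cite{EliMur} (giving an actual embedding when $I=0$ and the link is loose) as the base case.  In short: the reduction to few double points is a global conical $h$-principle keyed to a loose Legendrian boundary condition, not an iterated local pairwise cancellation, and the sign asymmetry is a byproduct of the stabilization needed to create looseness rather than of a Lagrangian Whitney handle.
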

\begin{Remark}
There is a version of Theorem \ref{thm:min-intersect} in the non-simply connected case where  the intersection number $I(f_0)$ is defined as an element of the group ring of $\pi_1(X)$ and where $|I(f_0)|$ denotes an appropriate norm on this ring. For simplicity, we focus on the simply connected case in this paper.
\end{Remark}  
Theorem \ref{thm:min-intersect} is proved in Section \ref{ssec:proofmin-intersect} as an application of results in \cite{EliMur}. 

This result has the following consequences for  exact Lagrangian immersions into $\R^{2n}_\st$.  
Let $L$ be an $n$-dimensional closed manifold. Recall that according to Gromov's $h$-principle for Lagrangian immersions the triviality of the complexified tangent bundle $TL\otimes\C$ is a necessary and sufficient condition for the existence of an exact Lagrangian immersion $L\to\R^{2n}_\st$, while exact Lagrangian regular homotopy classes are in natural one to one correspondence with homotopy classes of trivializations of $TL\otimes\C$. We write $s(L)$ for the minimal number of double points of a self-transverse exact Lagrangian immersion $L\to\R^{2n}_{\st}$. Given a homotopy class $\sigma$ of trivializations of $TL\otimes\C$, the refined invariant $s(L,\sigma)$ denotes the minimal number of double points of an exact self-transverse Lagrangian immersion $L\to\R^{2n}_{\st}$ representing the exact Lagrangian regular homotopy class $\sigma$.
 
\begin{cor}\label{cor:immersion-Rn}
Let $L$ be an $n$-dimensional closed manifold with $TL\otimes\C$ trivial and let $\sigma$ be a homotopy class of trivializations of $TL\otimes\C$. Then the following hold:
\begin{enumerate}
\item  If $n>1$ is odd or if $L$ is non-orientable, then $s(L,\sigma) \in \{1,2\}$.
\item  If $n=1$, then $s(L)=1$ and there exist $\sigma$ with $s(L,\sigma)=d$ for any integer $d>0$; if $n=3$, then $s(L)=1$ and for one of the two regular homotopy classes $\sigma$, $s(L,\sigma)=2$. 
\item  If $n$ is even and $L$ is orientable, then for $\chi(L)<0$, $s(L,\sigma) = \frac{1}{2}|\chi(L)|$, and for
$\chi(L)\geq  0$, either $s(L,\sigma) = \frac{1}{2}\chi(L)$ or $s(L,\sigma) = \frac{1}{2}\chi(L)+2$.
\end{enumerate}
\end{cor}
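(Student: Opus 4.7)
The plan is to apply Theorem \ref{thm:min-intersect} with $X=\R^{2n}_\st$, combined with two classical ingredients: Gromov's h-principle realizes each homotopy class $\sigma$ of trivializations of $TL\otimes\C$ by an exact Lagrangian immersion $f_0$, while Gromov's theorem on the non-existence of closed exact Lagrangian submanifolds of $\R^{2n}_\st$ forces $s(L,\sigma)\ge 1$. For case (1), with $n>2$ odd or $L$ non-orientable, the invariant $I(f_0)$ takes values in $\Z/2$, so Theorem \ref{thm:min-intersect} directly gives $\SI(f_1)\in\{1,2\}$; combined with the lower bound, $s(L,\sigma)\in\{1,2\}$.

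For case (3), when $n$ is even and $L$ is orientable, the first step is to identify $I(f_0)$. Contraction with $\omega$ gives a bundle isomorphism $\nu_{f_0}\cong T^*L$, and since $n$ is even $e(T^*L)=e(TL)=\chi(L)$. Combined with Whitney's formula $2I(f_0)=e(\nu_{f_0})[L]$ this yields $(-1)^{n/2}I(f_0)=\chi(L)/2$ with appropriate orientation conventions. Substituting into cases 3 and 4 of Theorem \ref{thm:min-intersect} gives the upper bounds $s(L,\sigma)\le|\chi(L)|/2$ when $\chi(L)<0$ and $s(L,\sigma)\le\chi(L)/2+2$ when $\chi(L)\ge 0$. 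For the matching lower bounds I would use $\SI(f)\ge|I(f)|$ together with the parity $\SI(f)\equiv I(f)\pmod 2$, which holds because cancelable pairs of double points contribute with opposite signs. These force $s(L,\sigma)=|\chi(L)|/2$ when $\chi(L)<0$ and $s(L,\sigma)\in\{\chi(L)/2,\chi(L)/2+2\}$ when $\chi(L)\ge 0$, exactly as stated.

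For case (2), the low-dimensional exceptions, I would argue directly. For $n=1$, exact Lagrangian immersions $S^1\to\R^2_\st$ are immersed closed plane curves of zero signed area; a figure-eight realizes $s(S^1)=1$, and for each integer $d>0$ one can construct such a curve with exactly $d$ double points and distinct rotation number, giving a distinct $\sigma$. For $n=3$, Theorem \ref{thm:min-intersect} applies to $\R^6_\st$ since it has infinite Gromov width, every closed orientable $3$-manifold is parallelizable so $TL\otimes\C$ is automatically trivial, and both values of $I(f_0)\in\Z/2$ are realized by suitable $\sigma$ (via a local Whitney-type modification in the Lagrangian category); this yields $s(L)=1$ together with the existence of a regular homotopy class with $s(L,\sigma)=2$. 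The main technical subtlety I anticipate is pinning down the sign in $(-1)^{n/2}I(f_0)=\chi(L)/2$, since the clean dichotomy in case (3) depends on this sign matching the case split in Theorem \ref{thm:min-intersect}; this is a bookkeeping check about orientations in the identification $\nu_{f_0}\cong T^*L$.
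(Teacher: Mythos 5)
Your proposal follows essentially the same route as the paper's own proof: realize $\sigma$ by an exact Lagrangian immersion via Gromov's $h$-principle, apply Theorem~\ref{thm:min-intersect} in $\R^{2n}_\st$ for the upper bounds, and combine Gromov's non-existence theorem with $\SI(f)\ge |I(f)|$ and the parity $\SI(f)\equiv I(f)\pmod 2$ for the lower bounds. Two small differences are worth flagging. First, you re-derive $I(f_0)=(-1)^{n/2}\chi(L)/2$ from the isomorphism $\nu_{f_0}\cong T^*L$ and Whitney's formula, whereas the paper simply cites this to~\cite{EkEtSu}; your derivation is fine, and your instinct that the sign bookkeeping is the one spot requiring care is also correct, since the clean split between the two alternatives in case~(3) hinges on it. Second, for the $n=3$ clause of part~(2), the key point is that \emph{both} smooth regular homotopy classes (i.e.\ both values of $I\in\Z/2$) are realized by exact Lagrangian immersions; your appeal to a ``local Whitney-type modification in the Lagrangian category'' is the right intuition (connect-summing with a Whitney sphere in a Darboux ball changes $I$ by one and changes $\sigma$), but it is only a sketch. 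The paper treats this as a known observation and in Section~\ref{ssec:more-Rn} attributes it to Audin~\cite{Audin}; if you want a self-contained proof, you should either invoke Audin or flesh out the connect-sum argument (in particular, that the glued-in Lagrangian annulus can be kept exact). Finally, note that the $n=2$ instances of parts~(1) and~(3) are outside the reach of Theorem~\ref{thm:min-intersect} and are covered by Sauvaget's constructions~\cite{Sauvaget}, as the paper remarks just after the corollary; your proposal implicitly acknowledges this by restricting to $n>2$, which is consistent with the paper.
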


Corollary \ref{cor:immersion-Rn} is proved in Section \ref{ssec:proofimmersion-Rn}; in Section  \ref{ssec:more-Rn} we  then give more detailed information about the case when $n$ is odd, respectively discuss the Lagrangian embeddings obtained from these immersions by  Lagrange surgery. The case $n=2$ in both (1) and (3) above does not follow from our proof of Theorem \ref{thm:min-intersect}; rather, these are results of Sauvaget \cite{Sauvaget}, who gave direct geometric constructions of self-transverse exact Lagrangian immersions of both oriented and non-oriented surfaces. In particular, Sauvaget constructed, as the key point for his result, an exact immersed genus two surface in $\C^2$ with exactly one double point. In Appendix \ref{Sec:explicit} we describe a higher dimensional analogue of that construction. 

It is interesting to  compare Corollary \ref{cor:immersion-Rn} with the results of \cite{ES1,ES2} which show that any homotopy $n$-sphere $\Sigma$ that admits a Lagrangian immersion into $\R^{2n}_{\st}$ with exactly one transverse double point of even Maslov grading and with induced trivialization of the complexified tangent bundle homotopic to that of the Whitney immersion of the standard $n$-sphere\footnote{i.e. the trivializations $\sigma\colon T S^n \otimes\C\to\C^n$ and $\wt\sigma\colon T \Sigma \otimes\C\to\C^n$ are related as
$\wt\sigma=\sigma\circ( h\otimes \C)$, where $h\colon T\Sigma\to TS$ is a bundle isomorphism covering a homotopy equivalence $\Sigma\to S^n$.}, must bound a parallelizable $(n+1)$-manifold. If $n$ is even, both the Maslov grading condition and the homotopy condition are automatically satisfied, and moreover the standard $n$-sphere is the only homotopy $n$-sphere that bounds a parallelizable $(n+1)$-manifold. Thus, if $n$ is even the standard $n$-sphere is the only homotopy $n$-sphere that admits a self-transverse Lagrangian immersion into Euclidean space with only one double point. This means in particular that in the case when $\dim(L)$ is even and $\chi(L)>0$, $s(L)$ is generally not determined by the homotopy type of $L$.  The following result constrains the homotopy type of a manifold for which this phenomenon may occur.

\begin{thm}\label{thm:hmtpytype} Let $L$ be an even dimensional spin manifold with $\chi(L)>0$. If $s(L)=\frac{1}{2}\chi(L)$ then $\pi_1(L) = 1$ and $H_{2k+1}(L) = 0$ for all $k$. In particular if $\dim L > 4$ then $L$ has the homotopy type of a CW-complex with $\chi(L)$ even-dimensional cells and no odd-dimensional cells.
\end{thm}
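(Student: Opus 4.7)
The plan is to apply the Ekholm--Sabloff Morse inequalities of \cite{EESa,EESori} to a self-transverse exact Lagrangian immersion $f\colon L\to\R^{2n}_\st$ realizing $\SI(f)=s(L)=\tfrac12\chi(L)$. The first step is to argue that the Legendrian lift $\tilde f$ of $f$ has Chekanov--Eliashberg DGA admitting an augmentation. Whitney's self-intersection formula for oriented Lagrangian immersions in $\R^{2n}_\st$ gives $I(f)=\pm\tfrac12\chi(L)$, so the equality $\SI(f)=|I(f)|$ forces every double point to have the same Whitney sign; in the exact spin setting this homogeneity pins the Reeb-chord gradings so that the DGA differential vanishes on generators for degree reasons, and the zero map becomes a valid augmentation $\varepsilon$.

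With $\varepsilon$ in hand the Ekholm--Sabloff inequality gives, for every field $\K$,
$$
\tfrac12\chi(L)=\SI(f)\;\ge\;\tfrac12\sum_j\rk H_j(L;\K)\;\ge\;\tfrac12|\chi(L;\K)|=\tfrac12\chi(L),
$$
so equality holds throughout and $H_{2k+1}(L;\K)=0$ for every $k$ and every $\K$. Taking $\K=\Q$ and $\K=\mathbb{F}_p$ for all primes $p$, universal coefficients yield $H_{2k+1}(L;\Z)=0$ and $H_{2k}(L;\Z)$ torsion-free for all $k$. In particular $H_1(L;\Z)=0$, so $\pi_1(L)$ is perfect.

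To promote perfectness to triviality I would rerun the Morse inequality with coefficients in a flat complex bundle $\EE_\rho$ associated to a nontrivial irreducible unitary representation $\rho\colon\pi_1(L)\to U(V)$. The equality argument forces $H_{\mathrm{odd}}(L;\EE_\rho)=0$ together with $\sum_j\rk H_j(L;\EE_\rho)=(\dim V)\chi(L)$; since $H_0(L;\EE_\rho)=V^{\pi_1(L)}=0$ and, by Poincar\'e duality, $H_n(L;\EE_\rho)\cong H_0(L;\EE_\rho^*)^*=0$, combining these identities across all irreducible $\rho$ via character theory of $\pi_1(L)$ is expected to yield the contradiction $\pi_1(L)=1$. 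Finally, for $\dim L>4$, Smale's simply-connected handle cancellation uses $\pi_1=1$, $H_{\mathrm{odd}}(L;\Z)=0$, and torsion-freeness of $H_{\mathrm{even}}$ to build a handle decomposition with $\chi(L)$ even-dimensional handles and no odd ones, which is the claimed CW model. The main obstacles are Step~1 (rigorously establishing the augmentation from minimality plus the spin hypothesis) and the twisted-coefficient step driving $\pi_1=1$; both depend on fine aspects of Legendrian contact homology beyond the statement of the Morse inequality itself.
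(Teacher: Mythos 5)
Your first two steps track the paper's argument closely in spirit. For the augmentation: the paper uses the formula from \cite{EkEtSu} that the signed count of Reeb chords (by grading parity) equals $(-1)^{n/2}\chi(L)/2$; equality with the unsigned count $\SI(f)=\chi(L)/2$ forces all chords to have even grading, hence the differential vanishes on generators and the zero map is an augmentation, with $L$ spin allowing arbitrary coefficient fields. Your ``homogeneity of Whitney signs'' is the same observation, slightly loosely phrased. For the vanishing of odd homology: you use the Ekholm--Sabloff Morse inequality over $\Q$ and over $\mathbb{F}_p$ together with $\sum_j\rk H_j\ge|\chi|$; the paper instead reads off the same conclusion directly from the duality long exact sequence of \cite{EESa}. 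These are essentially equivalent, so this part is sound.

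The $\pi_1(L)=1$ step, however, has a genuine gap, and your proposed route does not close it. Suppose you have established the twisted inequality
$\tfrac12\chi(L)\dim V\ge\tfrac12\sum_j\rk H_j(L;\EE_\rho)\ge\tfrac12|\chi(L;\EE_\rho)|=\tfrac12\dim V\,\chi(L)$
for a nontrivial irreducible unitary $\rho$, and hence $H_{\mathrm{odd}}(L;\EE_\rho)=0$, $H_0(L;\EE_\rho)=H_n(L;\EE_\rho)=0$, and $\sum_j\rk H_j(L;\EE_\rho)=\dim V\,\chi(L)>0$. This system is internally consistent: the nonzero homology can simply be concentrated in middle even degrees. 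There is no contradiction, and I do not see how ``character theory of $\pi_1(L)$'' extracts one without substantial further input; moreover, the twisted-coefficient version of the Ekholm--Sabloff bound is not available in the cited references, and if $\pi_1(L)$ is infinite (you only know it is perfect at this stage) it may lack nontrivial finite-dimensional unitary representations at all. The paper's proof takes a different and sharper route: for any connected covering $\pi\colon\tilde L\to L$, the lifted linearized Legendrian homology complex $\tilde C^{\mathrm{lin}}(\tilde L,\pi;k)$ of \cite{ES2} decomposes with an explicit graded structure, and Hamiltonian displaceability of $f(L)$ forces its total homology to vanish; the grading constraints then pin $\rk(\tilde S_{-1})=1$, which directly forces the covering to have degree one. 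This is a statement about every connected cover, not about unitary characters, and it is precisely what makes $\pi_1(L)=1$ drop out. Your final handle-cancellation step via the Whitney trick for $\dim L>4$ agrees with the paper.
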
 

Theorem \ref{thm:hmtpytype} is proved in Section \ref{ssec:proofhmtpytype}. The proof uses lifted linearized Legendrian homology, introduced in \cite{ES2} following ideas in \cite{Damian}. Note that this result can be viewed as an obstruction to an $h$-principle for exact Lagrangian immersions having the minimal, rather than near-minimal, number of self-intersection points. 

\subsection*{Surgery, Lagrangian embeddings and the Maslov class}
In \cite{Polterovich} Polterovich describes a local  Lagrangian  surgery construction which resolves   a double point of a Lagrangian immersion.
  Let $Q_+$ denote the manifold $S^1 \times S^{n-1}$, and $Q_-$ the mapping torus of an orientation-reversing involution of $S^{n-1}$.  Given  a Lagrangian immersion $f\colon L \rightarrow X$ of an oriented $n$-manifold with a single double point $p$, \cite[Propositions 1 \& 2]{Polterovich} imply:
\begin{enumerate}
\item if $n$ is odd,  there are Lagrangian embeddings $L \# Q_{\pm} \rightarrow X$; 
\item if $n$ is even, there is a Lagrangian embedding $L \# Q_{\epsilon} \rightarrow X$, where the sign of $\epsilon$ is given by $(-1)^{n(n-1)/2 + 1} \textrm{sign}(p)$,
\end{enumerate}
where sign$(p) \in \{\pm 1\}$ denotes the intersection index of the double point.   Combining this with Corollary \ref{cor:immersion-Rn} yields:

\begin{Corollary} \label{Cor:embedded}
Let $Y$ be a closed orientable 3-manifold. Then there is a Lagrangian embedding $Y \# (S^1 \times S^2) \rightarrow \R^6_\st$.
\end{Corollary}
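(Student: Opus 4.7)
The plan is to feed Corollary \ref{cor:immersion-Rn} into Polterovich's surgery construction, with essentially no work beyond verifying that the hypotheses apply.

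First, I would check that Corollary \ref{cor:immersion-Rn} is available for any closed oriented $3$-manifold $Y$. By a classical theorem of Stiefel, every closed orientable $3$-manifold is parallelizable, so $TY$ is trivial and hence so is $TY\otimes \C$. Since $n=3>1$ is odd, part (2) of Corollary \ref{cor:immersion-Rn} applies and yields $s(Y)=1$: there exist a homotopy class $\sigma$ of trivializations of $TY\otimes \C$ and a self-transverse exact Lagrangian immersion $f\colon Y\to \R^6_\st$ (representing $\sigma$) with exactly one transverse self-intersection point $p$.

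Next, I would apply Polterovich's Lagrangian surgery at $p$. Because $n=3$ is odd, the first item of the consequence of \cite[Propositions 1 \& 2]{Polterovich} recalled immediately above the corollary produces a Lagrangian embedding $Y\# Q_{\pm}\to \R^6_\st$; taking the plus sign gives $Y\#(S^1\times S^2)\to \R^6_\st$, which is the embedding sought.

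Strictly speaking, there is no real obstacle: both ingredients are quoted results, and the argument is a two-line assembly. The only points that require care are the verification that $TY\otimes\C$ is trivial (so that Corollary \ref{cor:immersion-Rn} may be invoked) and the observation that the parity of $n$ puts us in the branch of Polterovich's surgery where both topological types $Q_{\pm}$ of summand are realizable, so that $Q_+=S^1\times S^2$ is in particular available.
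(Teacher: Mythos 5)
Your proposal is correct and is exactly the argument the paper itself invokes: the statement appears immediately after the discussion of Polterovich surgery, with the remark that it follows by combining Corollary \ref{cor:immersion-Rn} (which gives $s(Y)=1$ for closed oriented $3$-manifolds, parallelizability by Stiefel providing the required triviality of $TY\otimes\C$) with the odd-dimensional case of the surgery construction, yielding the embedding $Y\#Q_+ = Y\#(S^1\times S^2)\to\R^6_\st$. No gap.
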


The question of determining the minimal number $k$ for which there is a Lagrangian embedding $Y \# k (S^1\times S^2) \rightarrow \R^6_\st$ was raised explicitly by Polterovich \cite[Remark 2]{Polterovich}.  The construction in Appendix \ref{Sec:explicit}, in combination with Lagrange surgery, gives an \emph{explicit} Lagrangian embedding of $Q_+ \# Q_+ \# Q_+$ into $\R^{2n}_{\st}$ for any odd $n\geq 3$.  

In another direction, in each odd dimension $n=2k+1$ our construction
yields  a Lagrangian immersion of the sphere $S^{n} \rightarrow \R^{2n}_{\st}$ with a single double point of Maslov grading $1$ (note the double point of the Whitney sphere has Maslov grading $n$). Using  Lagrange surgery we conclude:
\begin{Corollary}\label{cor:Maslov}
There exists a Lagrangian embedding $S^1 \times S^{2k} \rightarrow \R^{4k+2}_{\st}$ for which the generator of the first homology of positive action has non-positive  Maslov index $2-2k$. In particular, 
there exists a Lagrangian embedding $S^1 \times S^{2} \rightarrow \R^{6}_{\st}$ with zero Maslov class.
\end{Corollary}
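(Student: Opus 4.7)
The plan is to produce the embedding by applying Polterovich's Lagrange surgery to the single double point of the exact Lagrangian immersion $f \colon S^n \to \R^{2n}_{\st}$ with $n = 2k+1$ constructed in Corollary~\ref{cor:immersion-Rn}(1). By the refinement recorded in the paragraph preceding the statement, our construction can be arranged so that this unique double point has Maslov grading $1$ (in contrast to the Whitney sphere, whose double point has Maslov grading $n$). Apply the surgery using the model $Q_+ = S^1 \times S^{n-1}$; since $S^n$ is an identity for connect sum in dimension $n$, the smooth type of the result is
\[
S^n \# Q_+ \;\cong\; Q_+ \;=\; S^1 \times S^{2k},
\]
yielding a Lagrangian embedding $S^1 \times S^{2k} \hookrightarrow \R^{4k+2}_{\st}$.

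Next I compute the Maslov index on the generator of $H_1(S^1\times S^{2k}) \cong \Z$ of positive action. The new Lagrangian cannot be exact (by Gromov), so the $\lambda$-period is a nontrivial homomorphism $H_1 \to \R$ and singles out a preferred generator. Represent that generator by a loop $\gamma$ which lies on the sphere $S^n$ away from the surgery region and traverses the surgery handle once. Outside the handle the Lagrangian Gauss map factors through a contractible piece of $S^n$ and so contributes nothing; the entire Maslov index is carried by the tangent plane field along the segment of $\gamma$ inside the handle. In Polterovich's explicit local model this contribution is a linear function of the Maslov grading $d$ of the resolved double point and of the ambient dimension; evaluated at $d = 1$ and $n = 2k+1$ it yields the value $2-2k$, non-positive for $k\ge 1$. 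The orientation of $\gamma$ through the handle is fixed by the requirement that the primitive of $\lambda$ increases along the Reeb chord direction, which is precisely the positive-action condition.

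Specialising to $k=1$ gives an embedding $S^1 \times S^2 \hookrightarrow \R^6_{\st}$ whose generator of $H_1 \cong \Z$ of positive action has Maslov index $0$; since $H_1$ is cyclic this forces the Maslov class $\mu \in H^1(S^1\times S^2;\Z)$ to vanish identically, yielding the second assertion. The main technical step is the local Maslov calculation inside the Polterovich handle: one must follow the Lagrangian Gauss map along the interpolating family defining the handle and verify the claimed linear dependence on $d$ with the Legendrian contact homology grading conventions. Once this local computation is done, the rest of the argument assembles inputs that are already in place, namely the immersion from Corollary~\ref{cor:immersion-Rn}(1), the grading refinement produced by the construction, and the standard properties of Lagrange surgery recalled above.
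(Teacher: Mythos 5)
Your proposal is correct and follows essentially the same route as the paper: take the odd-dimensional immersion $S^{2k+1}\to\R^{4k+2}_\st$ with a single double point arranged to have Maslov grading $1$, resolve it by Polterovich surgery with the $Q_+ = S^1\times S^{2k}$ handle, and then read off the Maslov index of the generator of positive action from the grading of the resolved double point. The paper's own treatment is equally terse -- it invokes ``Using Lagrange surgery we conclude'' and leaves the local Maslov calculation inside the handle implicit -- so your identification of that calculation as the only real technical step, together with the positive-action normalization and the observation that $S^n\# Q_+ \cong Q_+$, accurately reproduces the argument.
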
  

The (non-)existence question for  Lagrangian embeddings into $\R^{2n}_\st$ with vanishing Maslov class is a well-known problem in symplectic topology.  Viterbo proved in \cite{Viterbo} that if $L$ admits a metric of non-positive sectional curvature then any Lagrangian embedding $L \rightarrow \R^{2n}_{\st}$ has non-trivial Maslov class, whilst Fukaya, Oh, Ohta and Ono infer the same conclusion  in Theorem $K$ of  \cite{FOOO} whenever  $L$ is a spin manifold with $H^2(L;\Q)=0$. Corollary \ref{cor:Maslov} shows that the  hypotheses in these theorems play more than a technical role; in particular, the assumption  on second cohomology in the latter result cannot be removed. 
Note that by taking products of the Maslov zero $S^1\times S^2$ in $\R^6_{\st}$, one obtains Maslov zero Lagrangian embeddings in $\R^{6k}_{\st}$ for every $k> 1$. 
We point out that the Lagrangian embedding in Corollary \ref{cor:Maslov} is not monotone. In the monotone case, the Maslov index of the generator of the first homology of positive action necessarily equals $2$, see 
\cite[Proposition 2.10]{Fukaya} (and references therein) and \cite[Theorem 1.5 (b)]{Damian}.

 \subsection*{Plan of the paper}
The paper is organized as follows. In Section \ref{sec:loose-knots} we recall the notions and the main results of the theory of loose Legendrian knots from \cite{Mur}. In Section \ref{sec:conical} we formulate the $h$-principle for Lagrangian embeddings with conical singularities, established in \cite{EliMur}, and deduce from it its own generalization: in the presence of a conical singularity, Lagrangian immersions with the minimal number of self-intersections abide an $h$-principle. Theorem \ref{thm:min-intersect} is then proved in Section \ref{ssec:proofmin-intersect} as an application of this $h$-principle. Corollary \ref{cor:immersion-Rn} and related explicit results about Lagrangian immersions into $\R^{2n}_{\st}$ are proved in Sections \ref{ssec:proofimmersion-Rn}, \ref{ssec:more-Rn}, and \ref{ssec:proofhmtpytype}.   
As is typical with $h$-principles, Theorem \ref{thm:min-intersect} does not provide explicit constructions of Lagrangian immersions with the minimal number of double points. In Appendix \ref{Sec:explicit} we complement Theorem \ref{thm:min-intersect} by constructing an explicit exact Lagrangian immersion of $P=(S^{1}\times S^{n-1})\#(S^{1}\times S^{n-1})$ into $\R^{2n}_{\st}$ with exactly one transverse double point (in particular, yielding an immersion in each dimension violating the Arnold-type bound which pertains in the presence of linearizable Legendrian homology algebra). Our construction is a generalization of Sauvaget's construction in the case $n=2$.  The construction of the appendix seems to have  no elementary relation to the arguments earlier in the paper;  in general, our immersions with few double points are obtained by first introducing many double points and then canceling them in pairs, whilst in the Appendix considerable effort is expended to keep the Lagrangians swept by appropriate Legendrian isotopies embedded.

\Acknowledgements  T.E. and I.S. are grateful to Fran\c{c}ois Laudenbach for drawing their attention to Sauvaget's work.  Y.E. is grateful to the Simons Center for Geometry and Physics where a part of this paper was completed. The authors thank Alexandr Zamorzaev for pointing out a gap in the original proof of Lemma \ref{lm:near-cone}.

\section{Loose Legendrian  knots}\label{sec:loose-knots}
The theory of loose Legendrian knots and the $h$-principle for Lagrangian caps with loose Legendrian ends, developed in \cite{Mur} and \cite{EliMur}, respectively, are crucial for the proof of our main result, Theorem \ref{thm:min-intersect}. In this section we recall the concepts and results from these theories that will be used in later sections.   

\subsection{Stabilization}\label{ssec:stablization}
We start with a discussion of  the \emph{stabilization construction} for Legendrian submanifolds, see \cite{Eliash-Stein, CieEli-Stein, Mur}.

Consider standard contact $\R^{2n-1}$:
$$
\R^{2n-1}_\st=\left(\R^{2n-1} \ , \ \xi_\st=\ker\left(dz-\sum\limits_1^{n-1}y_i dx_i\right)\right),
$$
where  $(x_1,y_1,\dots,x_{n-1},y_{n-1},z)$ are coordinates in $\R^{2n-1}$, with the Legendrian  coordinate subspace $\Lambda_{0}\subset \R^{2n-1}_{\st}$ given by
\[
\Lambda_0=\left\{(x_1,y_1,\dots,x_{n-1},y_{n-1},z)\colon 
x_1=y_2=y_3=\dots=y_{n-1}=0,\; z=0\right\}.
\]
Then $(\R^{2n-1}_{\st},\Lambda_{0})$ is a local model for any Legendrian submanifold in a contact manifold. More precisely, if $\Lambda\subset Y$ is any Legendrian $(n-1)$-submanifold of a contact $(2n-1)$-manifold $Y$ then any point $p\in\Lambda$ has a neighborhood $\Omega\subset Y$ that admits a map
\[
\Phi\colon (\Omega,\Lambda\cap \Omega)\to (\R^{2n-1}_{\rm st},\Lambda_0),\quad \Phi(p)=0,
\]
which is a contactomorphism onto a neighborhood of the origin.

We will carry out the stabilization construction in a local model that is slightly different from $(\R^{2n-1}_{\st},\Lambda_0)$, which we discuss next. Let
$F\colon\R^{2n-1}_\st\to\R^{2n-1}_\st$ denote the contactomorphism,
\[
F(x_{1},y_1,\,\dots,\,x_{n-1},y_{n-1},\,z)=
\left(x_1+\frac12 y_1^2,y_1,\, x_2,y_2,\,\dots,\,x_{n-1},y_{n-1},\, z+\frac13 y_1^3 \right).
\]
Then $F$ maps $\Lambda_0$ to
\[
\Lambda_\cu =\left\{(x_1,y_1,\dots,x_{n-1},y_{n-1},z)\colon 
x_1=\frac{1}{2}y_1^{2},\; y_2=\dots=y_{n-1}=0,\; z=\frac{1}{3}y_1^{3}\right\}
\]   
In the language of Appendix \ref{ssec:frontslice}, the front $\Gamma_\cu$ of $\Lambda_\cu$ in $\R^{n-1}\times\R$ is the product of $\R^{n-2}\subset\R^{n-1}$ and the standard cusp 
$\{(x_1,z)\colon 9z^2= 8x_1^3\}$ in $\R\times\R$. In particular, the two branches of the front are graphs of the functions $\pm h$, where 
\[
h(x)=h(x_1,\dots,x_{n-1})= \tfrac{2\sqrt{2}}3\,x_1^{\frac{3}{2}},
\]
defined on the half-space $\R^{n-1}_+:=\{x=(x_1,\dots,x_{n-1})\colon x_1\geq 0\}$.

Let $U$ be a domain with smooth boundary contained in the interior of $\R^{n-1}_{+}$, $U\subset\Int(\R^{n-1}_+)$. Pick a non-negative function $\phi\colon \R^{n+1}_+\to\R$ with the following properties: $\phi$ has compact support in $\Int(\R^{n-1}_+)$,
the function $\wt\phi(x):=\phi(x)-2h(x)$ is Morse, $U =\wt\phi^{-1}([0,\infty))$, and $0$ is a regular value of $\wt\phi$. Consider the front $\Gamma_\cu^U$ in $\R^{n-1}\times\R$ obtained from $\Gamma_\cu$ by replacing the lower branch of $\Gamma_\cu$, i.e.~the graph 
$z=-h(x)$, by the graph $z=\phi(x)-h(x)$.  Since $\phi$ has compact support, the front  $\Gamma_\cu^U$ coincides with $\Gamma_\cu$ outside a compact set. Consequently, the Legendrian embedding $\Lambda_\cu^U\colon \R^{n-1}\to\R^{2n-1}$ defined by the front $\Gamma_\cu^U$ coincides with $\Lambda_\cu$ outside a compact set.

\begin{Lemma}[\cite{CieEli-Stein, Mur}]\label{lm:stab}
There exists a compactly supported Legendrian regular homotopy $\Lambda_{\cu;\,t}$, $t\in[0,1]$ connecting $\Lambda_\cu$ to $\Lambda^{U}_\cu$ with $\SI\left(\{\Lambda_{\cu;\,t}\}_{t\in[0,1]}\right)$ equal to the number of critical points of $\phi(x)-h(x)$, and with $I\left(\{\Lambda_{\cu;\,t}\}_{t\in[0,1]}\right)= (-1)^{k-1}\chi(U)$ if $n=2k$  and $I\left(\{\Lambda_{\cu;\,t}\}_{t\in[0,1]}\right)\equiv \chi(U)\; (\mathrm{mod}\, 2)$ if $n$ is odd.  
\end{Lemma}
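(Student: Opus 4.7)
The strategy is to write down the Legendrian regular homotopy explicitly by linearly scaling $\phi$ on the lower branch of the front, identify the self-intersections of the trace as the simultaneous zeros of an explicit family of functions and their spatial gradients, and then perform a signed Morse-theoretic count on $U$.

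\textbf{The homotopy and its events.} I take the family of fronts $\Gamma_{\cu,t}\subset\R^{n-1}\times\R$ with upper branch $z=h(x)$ and lower branch $z=t\phi(x)-h(x)$, $t\in[0,1]$, so that $\Gamma_{\cu,0}=\Gamma_\cu$ and $\Gamma_{\cu,1}=\Gamma_\cu^U$. Since $\phi$ is compactly supported in $\Int(\R^{n-1}_+)$, the associated Legendrians $\Lambda_{\cu,t}\subset \R^{2n-1}_\st$ coincide with $\Lambda_\cu$ outside a fixed compact set, giving a compactly supported Legendrian regular homotopy. Writing $\wt\phi_t(x):=t\phi(x)-2h(x)$, a self-intersection of $\Lambda_{\cu,t}$ between its two sheets at $x$ requires both $z$-value and $y$-gradient to coincide, that is $\wt\phi_t(x)=0$ and $\nabla_x\wt\phi_t(x)=0$. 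The transverse double points of the trace immersion $H(x,t)=(\Lambda_{\cu,t}(x),t)$ are therefore the discrete solutions of this system with $t\in(0,1)$.

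\textbf{Counting via Morse theory.} For each critical point $p^*\in \Int(U)$ of $\wt\phi=\wt\phi_1$, the implicit function theorem yields a smooth arc $p(t)$ of critical points of $\wt\phi_t$ near $t=1$, along which the envelope theorem gives $\frac{d}{dt}\wt\phi_t(p(t))=\phi(p(t))>0$. Since $\wt\phi_0=-2h<0$ on $\Int(\R^{n-1}_+)$ and $\wt\phi_1(p^*)>0$, this critical value passes through $0$ at a unique $t^*\in(0,1)$, contributing one transverse double point of $H$. Critical points of $\wt\phi$ outside $U$ yield arcs along which the value stays negative and contribute nothing. Hence $\SI(\{\Lambda_{\cu,t}\})$ equals the (unsigned) count of critical points of $\wt\phi$ in $\Int(U)$, which under the Morse hypothesis is the count appearing in the statement.

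\textbf{Signs and $\chi(U)$.} The intersection sign at the event associated to $p^*$ is computed from the orientations of the two sheets of $\Lambda_{\cu,t^*}$ together with the time direction in $\R^{2n-1}\times[0,1]$. A direct Hessian computation identifies this sign as a global factor depending only on the cuspidal frame of $\Lambda_\cu$ multiplied by $(-1)^{\ind_{p^*}\wt\phi}$. Summing over events and applying Poincar\'e--Hopf to $-\nabla\wt\phi$ on $U$ (which points outward along the regular level $\p U$) yields
\[
I\bigl(\{\Lambda_{\cu,t}\}\bigr)=(-1)^{k-1}\chi(U)\quad (n=2k),\qquad I\equiv\chi(U)\pmod{2}\quad (n\text{ odd}).
\]
The main technical hurdle will be the orientation bookkeeping: verifying that the event sign decomposes as claimed, in particular extracting the global factor $(-1)^{k-1}$ from the cuspidal frame of $\Lambda_\cu$ by carefully comparing orientations of both sheets and the time direction with that of $U$. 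Once this is settled, the rest reduces to standard Morse theory and the implicit function theorem.
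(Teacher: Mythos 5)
Your overall strategy — deform the lower branch by a one-parameter family of functions, identify the self-intersections of the trace with the simultaneous zeros of $\wt\phi_t$ and $\nabla\wt\phi_t$, and conclude by a signed Morse/Poincar\'e--Hopf count — is the same strategy the paper uses, and the sign bookkeeping at the end (sign $=(-1)^{\ind_{p^*}\wt\phi}$ times a global factor from the cusp, summed via Poincar\'e--Hopf) matches the paper's proof. However, there is a genuine gap in the middle, and it comes from the specific family you chose.

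You take the linear family $\phi_t=t\phi$, so $\wt\phi_t=t\phi-2h$. You then argue: near $t=1$ the implicit function theorem gives an arc $p(t)$ of critical points with $\tfrac{d}{dt}\wt\phi_t(p(t))=\phi(p(t))>0$, and ``since $\wt\phi_0=-2h<0$ and $\wt\phi_1(p^*)>0$, this critical value passes through $0$.'' That inference tacitly assumes the arc extends all the way back to $t=0$. It cannot: $\wt\phi_0=-2h$ has no critical points in $\Int(\R^{n-1}_+)$ (its gradient is $(-2\sqrt{2}x_1^{1/2},0,\dots,0)\neq 0$ there), and since $\phi$ has compact support in $\Int(\R^{n-1}_+)$ the critical locus of $\{\wt\phi_t\}$ is confined to a compact set away from $\{t=0\}$. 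Hence the critical locus of the linear family is a $1$-manifold whose boundary lies entirely on $\{t=1\}$, and each component is either an arc joining \emph{two} critical points of $\wt\phi$ or a closed loop. The value $V$ along such an arc is not monotone in arclength — it increases only while $t$ increases, and $t$ turns around at birth--death moments — so whether $V$ actually hits $0$ is not determined by the endpoint values alone. In particular, an arc joining two critical points of $\wt\phi$ that both lie in $\Int(U)$ (both positive endpoint values) may stay entirely above $0$, contributing $0$ self-intersections rather than $2$; already in the $1$-dimensional model one can build $\phi$ with $\wt\phi|_U$ having three critical points while the linear family produces a single transverse double point. The converse failure (extra crossings from a loop, or from an arc dipping well below $0$) is also not excluded by your argument. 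So $\SI$ for the linear family does \emph{not} reduce to counting critical points of $\wt\phi|_U$, and the displayed claim about $\SI$ is unjustified.

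The paper sidesteps exactly this by \emph{constructing} a family $\phi_t$ tailored to the Morse function $\wt\phi|_U$ rather than using the linear interpolation. The family is monotone in $t$ (so no critical value can come back down), and, crucially, it is arranged so that for each critical point $x$ of $\wt\phi|_U$ there is a unique instant $t\in[0,1]$ at which $x$ is a critical point of $\phi_t-2h$, of the same index and of critical value exactly $0$ — and so that these are the only zero-level critical events across the homotopy. With that family the correspondence (double points of the trace) $\leftrightarrow$ (critical points of $\wt\phi|_U$) is a bijection by construction, and $\SI$ and $I$ follow immediately. To repair your argument you would need either to construct such a family explicitly (the paper just asserts it is ``straightforward''; it is the analogue of building a Morse function by successive births of cancelling pairs), or to prove that a generic $\phi$ makes the linear family behave as required — but the latter is simply false, as the example above shows. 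I'd also flag a smaller point: you assert ``critical points of $\wt\phi$ outside $U$ yield arcs along which the value stays negative'' without proof; this is in fact correct (the endpoint values of such an arc are negative, and $V$ attains its maximum at the arc endpoints), but the reasoning should be spelled out, since it is precisely the case analysis that fails for endpoints inside $U$.
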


\begin{proof}
It is straightforward to construct a family of functions $\phi_t(x)$, $t\in[0,1]$, with the following properties: 
\begin{enumerate}
\item  $\phi_0(x)=0$ and $\phi_1(x)=\phi(x) $;
\item the functions  are monotonically increasing in $t$; 
\item  for each critical point $x$ of $\wt\phi|_U=(\phi-2h)|_U$, there is a unique $t\in[0,1]$ such that $x$ is a critical point of  the function $\phi_t-2h$, of the same index and of critical value $0$.
\end{enumerate}
We associate with $\phi_t$ the front $\Gamma_{\cu;t}$ obtained from $\Gamma_{\cu} = \Gamma_{\cu;0}$ by replacing the lower branch of $\Gamma_{\cu}$, which by definition is  the graph 
$z=-h(x)$, by the graph $z=\phi_t(x)-h(x)$. The  Legendrian regular homotopy $\Lambda_{\cu;t}$ determined by this family of fronts has transverse self-intersections which correspond to critical points of $\phi_t-2h$ of critical value $0$. One can show that when $n=2k$ the sign of each intersection point equals $(-1)^{\ind\, x+k-1}$, where $\ind\,x$ is the Morse index of the critical point $x$ of the function $(\phi-2h)|_U$, see \cite{CieEli-Stein}.      
\end{proof}

To transport the stabilization construction to our standard local model, let  
$\Lambda_0^{U}=F^{-1}(\Lambda_{\cu}^{U})$ and $\Lambda_{0;\,t}=F^{-1}(\Lambda_{\cu;\,t})$, where $\Lambda_{\cu;\,t}$, $t\in[0,1]$, is the regular Legendrian homotopy constructed in Lemma \ref{lm:stab}. Then $\Lambda_{0;\,t}$,  $t\in[0,1]$,  is a compactly supported Legendrian regular homotopy connecting $\Lambda_0$ to $\Lambda^U_0$.
  
Consider a Legendrian $(n-1)$-submanifold $\Lambda$ of a contact $(2n-1)$-manifold $Y$ and a point $p\in\Lambda$. Fix a neighborhood $\Omega\subset Y$ of $p$ and a contactomorphism 
$$
\Phi\colon(\Omega,\Lambda\cap \Omega)\to (\R^{2n-1}_{\rm st},\Lambda_0).
$$
Replacing $\Omega\cap\Lambda$ with $\Phi^{-1}(\Lambda_0^U)$ we get
a Legendrian embedding $\Lambda^U$ which  coincides with $\Lambda$ outside of $\Omega$, and replacing it with $\Phi^{-1}(\Lambda_{0;\,t})$ we get a Legendrian regular homotopy
$\Lambda_t$, $t\in[0,1]$ connecting $\Lambda$ to $\Lambda^{U}$   
with $\SI\left(\{\Lambda_t\}_{t\in[0,1]}\right)$ equal to the minimal number of critical points of a Morse function on $U$ which attains its minimum value on $\p U$,  and with $I\left(\{\Lambda_t\}_{t\in[0,1]}\right)= (-1)^{k-1}\chi(U)$ if $n=2k$  and $I\left(\{\Lambda_t\}_{t\in[0,1]}\right)\equiv \chi(U)\;\mathrm{mod}\,2$ if $n$ is odd.  
We say that $\Lambda^U$ is obtained from $U$ via \emph{$U$-stabilization} in $\Omega$.
The most important case for us will be when $U$ is the ball. 
We say in this case that $\Lambda^U$ is the \emph{stabilization} of $\Lambda$ in $\Omega$ or simply the stabilization of $\Lambda$.

Let $\xi$ denote the contact plane field on $Y$, and note that there is an induced field of (conformally) symplectic 2-forms on $\xi$. We say that two Legendrian embeddings $f_0,f_1\colon\Lambda\to Y$ are \emph{formally Legendrian isotopic} if there exists a smooth isotopy $f_t\colon\Lambda\to Y$ connecting $f_0$ to $f_1$ and a $2$-parametric family of injective homomorphisms $\Phi_{s,t}:T\Lambda\to TY$ such that $\Phi_{0,t}=df_t$ for all $t\in[0,1]$, $\Phi_{s,0}=df_0$ and $\Phi_{s,1}=df_1$ for all $s\in[0,1]$, and such that $\Phi_{1,t}$ is a Lagrangian homomorphism $T\Lambda\to\xi$ for all $t\in[0,1]$. We will need the following simple lemma, see \cite{Eliash-Stein,CieEli-Stein,Mur}.

\begin{Lemma}\label{lm:stab-formal}
Let $\Lambda\subset Y$ be a Legendrian submanifold  and $\Lambda^U$ its  $U$-stabilization. Then if the Euler characteristic $\chi(U)$ of $U$ satisfies $\chi(U)=0$, then $\Lambda$ and $\Lambda^U$ are formally Legendrian isotopic.  
\end{Lemma}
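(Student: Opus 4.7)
The plan is to combine Lemma \ref{lm:stab} with Whitney's trick to cancel self-intersection pairs. Since $\chi(U) = 0$, the regular Legendrian homotopy $\Lambda_t$ provided by Lemma \ref{lm:stab} has algebraic self-intersection count $I(\{\Lambda_t\}_{t\in[0,1]}) = 0$ (in the oriented even-dimensional case $I = (-1)^{k-1}\chi(U)$, and in the odd-dimensional or non-oriented case $I \equiv \chi(U) \pmod 2$). This algebraic vanishing is what lets us pass from the regular Legendrian homotopy, which has isolated self-intersection events, to a genuine smooth isotopy at the cost of deforming the Lagrangian bundle data.

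First I would view $\Lambda_t$ as a level-preserving immersion $H\colon \Lambda\times[0,1]\to Y\times[0,1]$, $H(x,t)=(\Lambda_t(x),t)$, of an $n$-manifold into a $2n$-manifold with isolated transverse self-intersections whose algebraic count vanishes. For $n\geq 3$, Whitney's trick yields a regular homotopy of immersions $H_s\colon \Lambda\times[0,1]\to Y\times[0,1]$, rel boundary $\Lambda\times\{0,1\}$, with $H_0=H$ and $H_1$ an embedding. After a small perturbation and reparameterization I arrange each $H_s$ to remain level-preserving, $H_s(x,t)=(f_{s,t}(x),t)$, so that $f_{0,t}=\Lambda_t$ are the original Legendrian immersions and $f_{1,t}=:f_t$ are smooth embeddings giving an ambient isotopy from $\Lambda$ to $\Lambda^U$.

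Setting $\Phi_{s,t} := df_{1-s,t}\colon T\Lambda\to TY$ then produces the required formal Legendrian isotopy: $\Phi_{0,t}=df_t$ is injective, $\Phi_{1,t}=d\Lambda_t$ is a Lagrangian homomorphism with image in $\xi$ since $\Lambda_t$ is Legendrian, and $\Phi_{s,0}=df_0$, $\Phi_{s,1}=df_1$ hold by the rel boundary condition. Injectivity of the intermediate $\Phi_{s,t}$ is automatic since each $f_{s,t}$ is an immersion.

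The main obstacle I anticipate is arranging Whitney's trick compatibly with the level-preserving structure: opposite-sign self-intersections of $H$ may occur at different times $t$, and the natural Whitney disks cancelling them need not respect the fibration by slices $Y\times\{t\}$. One way around this is to pair the opposite-sign events directly and cancel each pair by a localized, $C^1$-small perturbation of the path $\Lambda_t$ within a small time-window, using the Reeb direction (transverse to $\xi$) to push the colliding sheets past each other; the resulting $f_t$ then has $df_t$ close to the original Lagrangian $d\Lambda_t$, and a straight-line interpolation through injective bundle maps assembles $\Phi_{s,t}$ without needing a global level-preserving Whitney disk.
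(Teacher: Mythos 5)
The paper states this lemma without proof, citing \cite{Eliash-Stein,CieEli-Stein,Mur}; so there is no in-text proof to compare against, and I am evaluating your argument on its own terms.

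Your overall strategy---use Lemma \ref{lm:stab} to get a regular Legendrian homotopy with vanishing self-intersection number, then cancel the double points of the trace to obtain a smooth isotopy, then interpolate the bundle data---points in a plausible direction, but the step that is supposed to close the gap you yourself flag is incorrect, and the error is not cosmetic.

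The central problem is the claim that the cancellation can be achieved by ``a localized, $C^1$-small perturbation of the path $\Lambda_t$ within a small time-window, using the Reeb direction to push the colliding sheets past each other.'' A transverse double point of the trace immersion $H\colon\Lambda\times[0,1]\to Y\times[0,1]$ is \emph{stable} under $C^1$-small perturbations of $H$, and a Reeb-direction push is just such a perturbation. Concretely, at a double-point moment $t_i$, one local sheet passes through the other transversely; any small level-preserving perturbation keeps one sheet on one side for $t<t_i-\eps$ and on the other side for $t>t_i+\eps$, so by continuity the sheets must still cross. A single event therefore cannot be removed by a small perturbation; and a cancelling pair can be removed by a localized perturbation only if the two events are already adjacent, i.e.\ the corresponding arcs in the two sheets lie in a small bidisk and the times $t_i,t_j$ are close. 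In general the opposite-sign events produced by Lemma \ref{lm:stab} are far apart in time and in $\Lambda$, and bringing them together requires a finger move along a Whitney disk in $Y\times[0,1]$---a deformation that is large in $C^0$, let alone $C^1$, and that you correctly observe has no reason to respect the slices $Y\times\{t\}$. Since the fix fails, the subsequent claim that $df_t$ stays $C^1$-close to $d\Lambda_t$ also fails, and with it the straight-line interpolation you use to produce $\Phi_{s,t}$.

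To complete an argument along your lines one would have to (i) apply Whitney's trick to $H$ rel boundary to obtain an embedding of $\Lambda\times[0,1]$ in $Y\times[0,1]$ which is level-preserving only near the ends---that is, a \emph{concordance} between $\Lambda$ and $\Lambda^U$---and then invoke the concordance-implies-isotopy theorem (valid here since the codimension is $n\ge3$) to straighten it to a genuine isotopy $f_t$; and (ii) produce the 2-parameter bundle data by some means other than closeness, e.g.\ by tracking the differentials through the Whitney regular homotopy of $H$ and the straightening homotopy, checking at each stage that the projection to $TY$ (forgetting $T[0,1]$) remains a monomorphism, which needs a real argument. Alternatively, and more in the spirit of the cited references, one can avoid Whitney's trick entirely: the $U$-stabilization is local, the underlying smooth embedding of $\Lambda^U$ is unknotted rel infinity in these dimensions, and the change it effects on the compactly supported formal class is an explicitly computable element (governed by $\chi(U)$) of the relevant homotopy group of Lagrangian monomorphisms, which vanishes exactly when $\chi(U)=0$. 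Either route supplies what your sketch is missing; as written, the Reeb-push step would not survive scrutiny.
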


\subsection{Loose Legendrian submanifolds}
Let $Y$ be a contact $(2n-1)$-manifold, $n>2$. We continue using the notation from Section \ref{ssec:stablization}. A Legendrian  embedding $\Lambda\to Y$ of a connected manifold $\Lambda$ (which we sometimes simply call a \emph{Legendrian knot}) is called \emph{loose}
if it is isotopic to the stabilization of another Legendrian knot.  
We point out  that looseness depends on the ambient manifold. A loose Legendrian embedding $\Lambda$ into a contact manifold $Y$ need not be loose in a smaller neighborhood $Y'$, $\Lambda\subset Y'\subset Y$. 

Any Legendrian submanifold $\Lambda\subset Y$ can be made loose by  stabilizing it in arbitrarily small neighborhood of a point.
Moreover, it can be made loose even without changing its formal Legendrian isotopy class. Indeed, one can first stabilize it
and then $U$-stabilize for some $U$ with $\chi(U)=-1$.

The following $h$-principle for loose Legendrian knots in contact manifolds of dimension $2n-1>3$ is proved in \cite{Mur}:  
\begin{thm}[\cite{Mur}]\label{prop:Murphy} Any two loose  Legendrian embeddings which coincide outside a compact set and which can be connected by a formal compactly supported Legendrian isotopy can be connected by a genuine compactly supported Legendrian isotopy.  
\end{thm}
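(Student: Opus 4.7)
The plan is to treat the theorem as a relative $h$-principle and attack it via the wrinkling technique of Eliashberg--Mishachev. Two ingredients combine: (i) realize the given formal compactly supported Legendrian isotopy by an isotopy through \emph{wrinkled} Legendrian embeddings; (ii) use a loose chart, whose existence is guaranteed by the looseness hypothesis, as a reservoir into which all wrinkles are transported and cancelled. After both steps are carried out parametrically in $t$, what remains is a genuine compactly supported Legendrian isotopy from $f_0$ to $f_1$.

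For the wrinkling step, I would invoke a wrinkled-Legendrian $h$-principle, the contact analogue of the Eliashberg--Mishachev $h$-principle for wrinkled embeddings. It asserts that the formal data $(f_t, \Phi_{s,t})$ can be realized by a smooth family $\widetilde f_t$ of \emph{wrinkled Legendrian embeddings}, i.e.\ genuine Legendrian away from finitely many model singularities, each the product of a ``wrinkle'' (a sphere of cusps with a double-fold center) with a disk, and agreeing with $f_0,f_1$ at the endpoints and with $f_t$ outside a compact set. The standard approach is holonomic approximation for the first-order differential relation defining Legendrian maps, with the wrinkles appearing as a controlled by-product of the approximation; one then invokes the classification of stable singularities in the Legendrian category to put each residual singularity into the model form.

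For the cancellation step, each wrinkle is a locally standard singularity in a small ball. I would (a) use ambient compactly supported Legendrian isotopies to transport every wrinkle into a loose chart --- a Darboux ball in which $\Lambda$ has the standard stabilization form of Section~\ref{ssec:stablization}; the hypothesis $n>2$ enters decisively here, as it gives room to slide singularities along $\Lambda$ without obstruction from transverse self-intersections. Then (b) cancel the wrinkle inside the loose chart by an explicit front-projection move: the front of the wrinkle is a pair of cusp curves whose combinatorial type matches the zig-zag of the stabilization, so one can isotope the two fronts together and smooth them out simultaneously, yielding a smooth Legendrian isotopy on the loose chart.

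The principal obstacle is the transportation in (a): one must move every wrinkle into a \emph{single} loose chart without creating new singularities and without altering $\widetilde f_t$ outside a compact set, all of this parametrically in $t$. The local cancellation (b) is essentially an explicit computation on fronts, and the temporal arrangement --- staggering wrinkle creation and cancellation so that the concatenated family is genuinely Legendrian for every $t$ --- is then routine. The hypothesis $2n-1>3$ is used twice: to guarantee enough codimension for (a), and so that the Eliashberg--Mishachev wrinkling machinery applies in the required form.
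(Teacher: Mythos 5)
The paper does not prove Theorem~\ref{prop:Murphy}: it is quoted as a black-box result from \cite{Mur} and used as such throughout Sections~\ref{sec:loose-knots}--\ref{sec:conical}. Your sketch is therefore being compared against Murphy's actual argument rather than anything in the present text, and on that basis the overall strategy you describe is the right one: realize the formal compactly supported Legendrian isotopy by an isotopy through wrinkled Legendrian embeddings, then exploit the loose chart to absorb the wrinkles, with the dimension hypothesis $2n-1>3$ entering at both steps.

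That said, two of your subsidiary claims gloss over the real work. First, the wrinkled-Legendrian $h$-principle is not a direct consequence of holonomic approximation plus stable singularity theory; Murphy has to set up the wrinkled Legendrian category from scratch (the singularities are products of a standard ``Legendrian wrinkle'' with a disk, and constructing the family $\widetilde f_t$ with exactly these models and no others requires its own inductive argument, not a one-line appeal to Eliashberg--Mishachev). Second, the cancellation step inside the loose chart is not merely ``isotope the two cusp fronts together and smooth''; the point of the loose chart is that it contains a stabilization zig-zag of a definite size relative to its Darboux neighborhood, and the wrinkle is resolved by feeding its cusp sphere through that zig-zag --- an explicit front move that uses the quantitative largeness of the chart, not just its combinatorial type. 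Your ``principal obstacle'' paragraph correctly identifies where the difficulty lies (transporting all wrinkles into one loose chart parametrically in $t$ without creating new singularities), and that is indeed where most of the technical content of Murphy's proof sits. So: right skeleton, but both the wrinkling input and the cancellation move are substantially heavier than you make them sound, and you should not present them as routine.
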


\begin{Remark} 
It is also shown in \cite{Mur} that the $U$-stabilization of a Legendrian knot is loose for any non-empty $U$. 
\end{Remark}

These results imply the following:  
\begin{cor}\label{cor:loose-stab-loose} Any loose Legendrian knot $\Lambda$ is a stabilization of some other loose Legendrian knot $\Lambda'$.
\end{cor}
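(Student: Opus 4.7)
\textbf{Proof plan for Corollary \ref{cor:loose-stab-loose}.} By definition of looseness, there exists a Legendrian knot $\Lambda_0 \subset Y$ and a small ball neighborhood $\Omega \subset Y$ of a point $p \in \Lambda_0$ such that $\Lambda$ is Legendrian isotopic to the (ball-)stabilization $\mathrm{stab}(\Lambda_0)$ performed in $\Omega$. The plan is to modify $\Lambda_0$, without changing its formal Legendrian isotopy class, into a loose knot $\Lambda'$ whose stabilization in $\Omega$ is still Legendrian isotopic to $\Lambda$; Murphy's $h$-principle (Theorem \ref{prop:Murphy}) then finishes the argument.

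First I would pick a second point $q \in \Lambda_0$, distinct from $p$, and a small neighborhood $\Omega' \subset Y$ of $q$ that is disjoint from $\Omega$. Inside $\Omega'$ I perform a $U$-stabilization of $\Lambda_0$ with $U$ a non-empty codimension-$0$ submanifold of Euler characteristic $\chi(U)=0$ (for instance $U$ an annulus in a local $\R^{n-1}_+$-chart). Call the resulting knot $\Lambda' := \Lambda_0^U$. By the remark following Theorem \ref{prop:Murphy}, $\Lambda'$ is loose; by Lemma \ref{lm:stab-formal}, $\Lambda'$ and $\Lambda_0$ are formally Legendrian isotopic by a compactly supported formal isotopy $\{G_t\}_{t\in[0,1]}$ supported in $\Omega'$.

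Next I would form $\mathrm{stab}(\Lambda')$ by performing the ball-stabilization construction in $\Omega$ (the same neighborhood as before), using the same local model. Since the supports of the two constructions are disjoint, $\mathrm{stab}(\Lambda')$ coincides with $\mathrm{stab}(\Lambda_0)$ outside the compact set $\overline{\Omega}\cup\overline{\Omega'}$. Both $\mathrm{stab}(\Lambda')$ and $\mathrm{stab}(\Lambda_0)$ are loose (any stabilization is loose, again by the remark after Theorem \ref{prop:Murphy}), and they are formally Legendrian isotopic via a compactly supported formal isotopy obtained by running the formal isotopy $G_t$ inside $\Omega'$ while leaving the stabilization region $\Omega$ unchanged.

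Applying Theorem \ref{prop:Murphy} to this pair yields a compactly supported genuine Legendrian isotopy from $\mathrm{stab}(\Lambda')$ to $\mathrm{stab}(\Lambda_0)$, which in turn is Legendrian isotopic to $\Lambda$ by the definition of looseness of $\Lambda$. Therefore $\Lambda$ is Legendrian isotopic to the stabilization of the loose knot $\Lambda'$, as required. The only delicate step is the disjointness bookkeeping in the second paragraph, namely ensuring that the formal isotopy $G_t$ and the ball-stabilization construction take place in disjoint regions so that they can be carried out independently and combined into a single formal Legendrian isotopy of the stabilized knots; this is straightforward once $q \ne p$ and $\Omega'$ is chosen small enough.
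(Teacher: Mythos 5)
Your argument is correct, and it reaches the same conclusion by the same two tools (Lemma~\ref{lm:stab-formal} and Theorem~\ref{prop:Murphy}), but it routes through the definition of looseness in a way the paper avoids. The paper never passes to a witness $\Lambda_0$ with $\Lambda \simeq \mathrm{stab}(\Lambda_0)$; instead it builds $\Lambda'$ directly out of $\Lambda$: take disjoint domains $U$ (a ball, $\chi=1$) and $C$ ($\chi=-1$) in $\Int(\R^{n-1}_+)$, set $\Lambda'=\Lambda^C$, and observe $(\Lambda')^U=\Lambda^{C\cup U}$ is formally isotopic to $\Lambda$ since $\chi(C\cup U)=0$; both $\Lambda$ and $\Lambda^{C\cup U}$ are loose, so Theorem~\ref{prop:Murphy} upgrades the formal isotopy to a genuine one. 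Here the hypothesis that $\Lambda$ is loose is used exactly once, as an input to Theorem~\ref{prop:Murphy}. Your version instead uses looseness to produce $\Lambda_0$, performs a $\chi=0$ stabilization on $\Lambda_0$ in a region disjoint from the ball-stabilization locus, and applies Theorem~\ref{prop:Murphy} to the pair $\mathrm{stab}(\Lambda_0^U)$ and $\mathrm{stab}(\Lambda_0)$, whose looseness is automatic because they are stabilizations. Both routes are valid; yours carries a bit of extra bookkeeping (the two disjoint modification regions, and the need to stitch the formal isotopy together with the untouched stabilization), while the paper's is shorter precisely because it exploits the looseness of $\Lambda$ head-on rather than unpacking it. A minor stylistic point: the corollary should be read as "isotopic to a stabilization," which is what both proofs deliver.
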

\begin{proof}   
Let $U \subset \Int(\R^{n-1}_+)$ be a ball and $C \subset \Int(\R^{n-1}_+)$ be a domain with Euler characteristic $-1$ disjoint from $U$. If $\Lambda$ is loose then according to Lemma \ref{lm:stab-formal} the stabilization $\Lambda^{C\cup U}=(\Lambda^C)^U$ is formally Legendrian isotopic to $\Lambda$. Hence in view of Theorem \ref{prop:Murphy} there is a genuine Legendrian isotopy connecting the stabilization $(\Lambda^{C})^{U}$ of the loose Legendrian knot $\Lambda'=\Lambda^C$ to $\Lambda$.
\end{proof}

\section{Lagrangian immersions with a conical singular point}\label{sec:conical}
In this section we establish an $h$-principle for maps which are self-transverse   Lagrangian immersions with the   minimal possible number of double points away from a single conical singularity. This result is a generalization and a corollary of the  corresponding result for Lagrangian embeddings from \cite{EliMur} which we state as  Theorem \ref{thm:caps-embedded} below.

\subsection{Legendrian isotopy and Lagrangian concordance}
The following result about realizing a Legendrian isotopy as a Lagrangian embedding of a cylinder is proved in \cite[Lemma 4.2.5]{ELIGRO-findim}. Let $Y$ be a contact manifold with contact structure $\xi$ given by the contact $1$-form $\alpha$, $\xi=\ker(\alpha)$. The symplectization of $Y$ is the manifold $\R\times Y$ with symplectic form $d(e^{s}\alpha)$, where $s$ is a coordinate along the $\R$-factor. 

Let $f_t\colon\Lambda\to Y$, $t\in[-1,1]$, be a Legendrian isotopy that is constant near its endpoints and which connects Legendrian embeddings $f_{-}=f_{-1}$ and $f_{+}=f_{1}$. We extend $f_t$ to all $t\in\R$ by setting $f_t=f_-$ for $t\leq -1$ and $f_t=f_+$ for $t\geq 1$. 
\begin{Lemma}\label{lm:Leg-Lag}
There exists a Lagrangian embedding
$$
G\colon\R\times\Lambda\to \R\times Y,
$$ 
given by the formula $G(t,x)=(h(x,t),\wt f_{t}(x))$ and with the following properties:
\begin{itemize}
\item there exists $T>0$ such that $G(t,x)=(t,f_-(x))$ for $t<-T$ and $G(t,x)=(t,f_+(x))$ for $t>T$;
\item $\wt f_{t}$ is a Legendrian embedding $C^\infty$-close to  $f_{t}$.
\end{itemize}
 \end{Lemma}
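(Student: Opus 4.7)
The plan is to seek $G$ of the form $G(t,x) = (h(t,x),\tilde f_t(x))$, where $\tilde f_t\colon\Lambda\to Y$ is a Legendrian isotopy $C^\infty$-close to $f_t$ (with the same endpoints $f_\pm$) and $h\colon \R\times\Lambda\to\R$ is a function to be determined. Since $\tilde f_t^*\alpha = 0$ for every $t$, the pullback of the Liouville primitive is
\[
G^*(e^s\alpha) \;=\; e^{h(t,x)}\,\tilde H_t(\tilde f_t(x))\,dt,
\]
where $\tilde H_t$ is the contact Hamiltonian generating $\tilde f_t$. The Lagrangian condition $d\bigl(G^*(e^s\alpha)\bigr)=0$ is therefore equivalent to the function $(t,x)\mapsto e^{h(t,x)}\,\tilde H_t(\tilde f_t(x))$ being independent of $x$ at each fixed $t$.

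First I would reparametrize $t$ so that the isotopy $f_t$ is concentrated in a short time window $[-\delta,\delta]$; outside this window $H_t$ vanishes and one simply sets $h(t,x)=t$, so that $G$ is automatically the flat cylinder $(t,f_\pm(x))$ there and the boundary condition in the lemma is satisfied for free. Next I would perturb $f_t$ to a nearby Legendrian isotopy $\tilde f_t$, equal to $f_\pm$ near the endpoints, whose contact Hamiltonian $\tilde H_t$ has constant (say positive) sign along $\tilde f_t(\Lambda)$ wherever the isotopy is non-stationary. A concrete way is to compose $f_t$ with a time-dependent Reeb shift $R^{\mu(t)}$ supported in $(-\delta,\delta)$, with $\mu'(t)$ chosen much larger than $\|H_t\|_{C^0}$ on the support; the perturbed Hamiltonian then acquires a positive summand $\mu'(t)$ and becomes uniformly positive.

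Having secured positivity, I would then solve for $h$ explicitly by setting
\[
h(t,x)\;=\;c(t)\;-\;\log\bigl(\tilde H_t(\tilde f_t(x))\bigr)
\]
on the support of the isotopy, where $c(t)$ is a smooth function chosen so that $h(t,x)$ glues smoothly to $h(t,x)=t$ on the boundary of $[-\delta,\delta]\times\Lambda$. A direct check using $\tilde f_t^*\alpha=0$ shows that the resulting $G$ is a smooth embedding (for $\mu$ small enough), that $G^*(d(e^s\alpha))=0$, and that $G$ coincides with $(t,f_\pm(x))$ outside a compact set.

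The step I expect to be the main obstacle is producing the perturbation $\tilde f_t$ with the constant-sign property without losing $C^\infty$-closeness to $f_t$ or disturbing the endpoints: the Reeb-shift trick is local in the neighborhood of a point, but on a general contact manifold the Reeb direction has no preferred global section. The standard way around this is to work inside a tubular contact neighborhood of the trace $\{(t,f_t(x))\}$ in $Y$, in which the isotopy is described by a one-parameter family of $1$-jets of functions on $\Lambda$ and the Reeb direction is canonical; one then carries out the shift of $\mu'(t)$ there and glues using a cutoff in $t$ that is compactly supported in $(-\delta,\delta)$.
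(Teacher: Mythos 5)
Your reduction of the Lagrangian condition to the statement that $(t,x)\mapsto e^{h(t,x)}\,\tilde H_t(\tilde f_t(x))$ be independent of $x$ is correct, and you rightly read off from it that the contact Hamiltonian of the perturbed isotopy must be sign-definite on $\Lambda$ for each fixed $t$. The gap is that this requirement is incompatible with $C^\infty$-closeness, and the Reeb-shift device does not get around it. Composing with $R^{\mu(t)}$ turns the Hamiltonian on the image into $\mu'(t)+H_t(f_t(x))$; to make this sign-definite on the support of the isotopy you need $|\mu'(t)|>\|H_t\|_{C^0}$ there, and since $\mu(\pm T)=0$ (so that $\tilde f_{\pm T}=f_\pm$) the function $\mu$ must climb by at least $\int\|H_t\|_{C^0}\,dt$ before returning to zero. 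That integral is invariant under reparametrization of $t$, so $\|\mu\|_{C^0}$ is bounded below by a generically positive quantity and $\tilde f_t=R^{\mu(t)}\circ f_t$ cannot be made $C^\infty$-close to $f_t$. Working in a $1$-jet (Weinstein) neighborhood of the moving Legendrian, as you suggest, makes the Reeb direction canonical but has no effect on this size obstruction. The obstacle is in fact structural: a $C^\infty$-small perturbation of $f_t$ changes $\alpha(\partial_t f_t)$ by a $C^\infty$-small amount, so it can never turn a Hamiltonian that genuinely changes sign along $\Lambda$ at some time (as happens, for example, at a Reidemeister~I type moment of a front isotopy) into a sign-definite one. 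Thus the separation-of-variables formula $h=c(t)-\log\tilde H_t$ is only available after a large perturbation, which contradicts the second bullet of the lemma.

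For context, note that the paper itself does not prove this statement: it is quoted from Lemma 4.2.5 of the Eliashberg--Gromov reference, so there is no in-paper argument to compare against. The point of that result is precisely that one can realize an \emph{arbitrary} Legendrian isotopy (sign changes of the Hamiltonian and all) by a nearby Lagrangian cylinder; any proof must therefore circumvent, rather than enforce, the sign-definiteness that your ansatz requires, and a global time-dependent Reeb shift cannot be the mechanism.
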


We will need the following modification of Lemma \ref{lm:Leg-Lag} for Lagrangian immersions.
Let $f_t\colon\Lambda\to Y$, $t\in[-1,1]$, be a self-transverse regular Legendrian homotopy constant near its endpoints that connects Legendrian embeddings $f_-$ and $f_+$. As in Lemma \ref{lm:Leg-Lag} we extend $f_t$ to all $t\in\R $ as independent of $t$ for $|t|\ge 1$.

\begin{Lemma}\label{lm:Leg-Lag-imm}
There exists a self-transverse Lagrangian immersion
$$
G\colon \R\times\Lambda \to \R\times Y,
$$ 
given by the formula $G(t,x)=(h(x,t),\wt f_{t}(x))$ and with the following properties:
\begin{itemize}
\item
there exists $T>0$ such that $G(t,x)=(t,f_{-}(x))$ for $t<-T$ and $G(t,x)=(t,f_{+}(x))$ for $t>T$;
\item $\wt f_{t}$ is  $C^\infty$-close to  $f_{t}$;
\item the double  points of $G$ are in one-to-one index preserving correspondence with the  double points of the regular homotopy $f_t$.
\end{itemize}
\end{Lemma}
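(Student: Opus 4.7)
The plan is to adapt the proof of Lemma \ref{lm:Leg-Lag} to the immersed setting by performing the cylinder construction locally and then gluing. I would first perform a small $C^\infty$-perturbation of the regular homotopy, compactly supported in $(-1,1)$, so that the double points of the trace map $F(x,t)=f_t(x)$ occur at distinct times $t_1<\cdots<t_N$ in $(-1,1)$, each associated with a pair $(x_k^1,x_k^2)\in\Lambda\times\Lambda$ such that $f_{t_k}(x_k^1)=f_{t_k}(x_k^2)=y_k\in Y$ and the two local sheets of $F$ are transverse at $y_k$.

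Choose pairwise disjoint small open time neighborhoods $I_k\subset(-1,1)$ of the $t_k$ and small open Darboux neighborhoods $V_k\subset Y$ of the $y_k$. Outside $\bigcup_k I_k$ the family $f_t$ is a genuine Legendrian isotopy, and Lemma \ref{lm:Leg-Lag} directly supplies a Lagrangian embedding of $\Lambda\times(\R\setminus\bigcup_k I_k)$ into $\R\times Y$ in the required normal form. Over each $I_k$ I would restrict attention to the two sheets of $\Lambda$ near $x_k^1$ and $x_k^2$ separately: each such sheet is a genuine Legendrian isotopy into $V_k$, so Lemma \ref{lm:Leg-Lag} produces two Lagrangian cylinders $L_k^1,L_k^2\subset\R\times V_k$. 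A small compactly supported correction to the reparametrization function $h$ from Lemma \ref{lm:Leg-Lag} on each sheet arranges $h(x_k^1,t_k)=h(x_k^2,t_k)=s_k$, which is a single real equation per crossing; this forces $L_k^1\cap L_k^2$ to contain the point $(s_k,y_k)$.

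Transversality of this crossing in $\R\times Y$ reduces to a generic scalar condition on $h$: writing $T_i\subset\R\oplus T_{y_k}Y$ for the tangent plane to $L_k^i$ at the crossing, each $T_i$ is the graph of a linear form $\ell_i\colon\bar T_i\to\R$ over its projection $\bar T_i\subset T_{y_k}Y$, and $T_1\cap T_2=0$ precisely when $\ell_1-\ell_2$ does not vanish on the one-dimensional intersection $\bar T_1\cap\bar T_2$; a further $C^\infty$-small modification of $h$ inside $I_k$ achieves this. Extraneous double points $G(t_1,x_1)=G(t_2,x_2)$ not coming from the prescribed crossings are ruled out by applying the injectivity control from Lemma \ref{lm:Leg-Lag} sheet-by-sheet, taking the $I_k$ and the $h$-correction small enough that $\wt f_t$ remains $C^\infty$-close to $f_t$. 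The index of each double point of $G$ matches that of the corresponding double point of $F$ because the local picture is a small Lagrangian perturbation of the product $(t,f_t(x))$, which preserves local intersection signs.

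The main obstacle I expect is the book-keeping involved in matching the locally perturbed cylinders over the $I_k$ to the global Lagrangian cylinder from Lemma \ref{lm:Leg-Lag} on overlaps, while simultaneously preserving the three required properties (the formula $G(t,x)=(h(x,t),\wt f_t(x))$, the normal form at infinity, and $C^\infty$-closeness of $\wt f_t$ to $f_t$). Both difficulties dissolve with standard cut-off arguments, and the index-preserving correspondence of double points then follows routinely from the smallness of the perturbations.
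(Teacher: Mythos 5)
Your proposal takes a genuinely different route from the paper's. The paper reduces to a single crossing at $t=0$, normalizes so that one branch is stationary, and lifts the other branch by an explicit generating-function formula $G(\tau,\xi)=\bigl(\tau,\xi,e^{-\tau}\sigma_\xi,e^{-\tau}\sigma_\tau\bigr)$, engineering the unique transverse self-intersection at the unique interior critical point of $\sigma(\cdot,0)$. You instead glue Lemma \ref{lm:Leg-Lag}-cylinders sheet by sheet over each crossing interval and try to adjust $h$ so that the two sheets meet.

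The key step of your argument has a gap. You claim that arranging $h(x_k^1,t_k)=h(x_k^2,t_k)=s_k$ forces $L_k^1\cap L_k^2$ to contain $(s_k,y_k)$. But the cylinder $L_k^i$ passes through $\bigl(h(x_k^i,t_k),\wt f^{\,i}_{t_k}(x_k^i)\bigr)$, and Lemma \ref{lm:Leg-Lag} gives $\wt f^{\,i}_t$ only $C^\infty$-close to $f_t$, not equal; in general $\wt f^{\,1}_{t_k}(x_k^1)\neq\wt f^{\,2}_{t_k}(x_k^2)$, both near but distinct from $y_k$. Correcting $h$ only shifts the $\R$-coordinate and cannot reconcile the $Y$-coordinates, so your adjustment does not produce a common point. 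Moreover, $h$ cannot be varied independently of $\wt f_t$: the Lagrangian condition $d\bigl(e^{h}\,\wt f_t^{\,*}\alpha\bigr)=0$ determines $h$ from $\wt f_t$ up to adding a function of $t$ alone, so only a one-parameter-per-slice family of corrections is even available.

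Your strategy could be repaired by stability rather than forced coincidence: if, by taking the $I_k$ short and examining the construction of \cite{ELIGRO-findim} (rather than merely quoting the statement of Lemma \ref{lm:Leg-Lag}), one also gets $h(x,t)$ $C^\infty$-close to the projection $(x,t)\mapsto t$, then $L_k^1,L_k^2$ are $C^1$-close to the graphs $(t,f_t(x))$ over the two sheets, which by self-transversality of the regular homotopy meet transversally at $(t_k,y_k)$; persistence of transverse intersections then yields exactly one transverse double point of $G$ near $(t_k,y_k)$ with the correct sign, and this dispenses with both your $h$-coincidence step and your separate transversality perturbation. But the closeness of $h$ to $t$ is an additional input you have not supplied, and the paper sidesteps the whole issue by writing down the local model explicitly.
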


\begin{proof}
The construction from \cite[Lemma 4.2.5]{ELIGRO-findim} which proves Lemma \ref{lm:Leg-Lag} can be applied with some additional care near the self-intersection instances of the Legendrian regular homotopy to prove Lemma \ref{lm:Leg-Lag-imm}. Here, however,  we will use a different argument.

Note that it is sufficient to consider the case when there is exactly one transverse self-intersection  point $q\in Y$ of the regular homotopy $f_t$ at the  moment $t=0 $. It is also sufficient to construct the immersed Lagrangian cylinder that corresponds to the Legendrian regular homotopy restricted to some interval $[ -\eps, \eps]$ for a small $\eps>0$, because then  one can apply Lemma \ref{lm:Leg-Lag} for the isotopy $f_t$ restricted to the rest of $\R$.

There exist local coordinates $(x,y,z)\in\R^{n-1}\times\R^{n-1}\times\R=\R^{2n-1}_{\st}$  in a neighborhood  $\Omega$ of $q$ such that the two intersecting branches $B_0$ and $B_1$ of $f_0$ at $q$ are given by the inclusion of 
\begin{align}
B_0&=\{(x,y,z)\colon y=0, z=0, |x|< 1\},\\
B_1&=\{(x,y,z)\colon x=0, z=0, |y|< 1\}.
\end{align}   
into $\R^{2n-1}_{\st}$. 

Modifying the regular homotopy $f_t$ slightly for $t$ close to $0$ we obtain a regular homotopy $\wt f_t$ without self-intersections for $t\in[0, \eps]$ and which, for $t\in[-\eps,0]$, is supported in $\Omega$ and has the following special properties:
\begin{itemize}
\item $\wt f_t \equiv \wt f_0 $ on $B_1$ and near $\p B_0$;
\item $\wt f_{-\eps}|_{B_0}$ is given by the
formula 
$$
\wt f_{-\eps}(\xi)=(x(\xi),y(\xi),z(\xi))=(\xi,0,-\delta),\quad \xi\in\R^{n-1},\;|\xi|\le1
$$
and, using the same notation, $\wt f_{0}|_{B_0}$ is given by the formula 
$$
\xi\mapsto\left(\xi,\frac{\p\phi}{\p \xi },\phi(\xi)\right),
$$ 
where  $\phi(\xi)=\delta\theta(|\xi|)$ for a small positive constant $\delta\ll\eps$ and a  $C^\infty$-function  $\theta\colon [0,1]\to[-1,1]$, which is equal to $-1$ near $1$, is equal to $1$ near $0$, and which has non-positive derivative.
\end{itemize}

The isotopy $\{\wt f_t\}_{t\in[0,\eps]}$ can be lifted to a Lagrangian cylinder in the symplectization using  Lemma \ref{lm:Leg-Lag}. Consider a $C^\infty$-function $\sigma\colon [-\eps,0]\times B_0\to\R$ with the following properties (using coordinates $(\tau,\xi)\in[-\eps,0]\times\R^{n-1}$, $|\xi|<1$ in analogy with the above):
\begin{itemize}
\item $\sigma(\tau,\xi)=-e^{\tau}\delta$  near $(\{-\eps\}\times B_0\times)\cup ([-\eps,0]\times\p B_0)$;
 \item $\sigma(\tau,\xi)=e^{\tau}\phi(\xi)$  near $0\times B_0$;
 \item  the function $\sigma(\tau,0)$, $\tau\in[-\eps,0]$ has a unique zero  in
 $(-\eps,0)$, which is moreover a regular value.
\end{itemize}
Now the required Lagrangian immersion $G|_{[-\eps,0]\times(B_0\cup B_1)}$ lifting
$\{\wt f_t\}_{t\in[-\eps,0]}$  to the symplectization $\R\times \R^{2n-1}_\st$ can be defined by the formula

\begin{alignat*}{2}
G(\tau,\eta)&=(t(\tau,\eta),x(\tau,\eta),y(\tau,\eta),z(\tau,\eta))&\\
&=(\tau,0,\eta,0),&\text{ if }(0,\eta,0)\in B_1,\\
G(\tau,\xi)&=(t(\tau,\xi),x(\tau,\xi),y(\tau,\xi),z(\tau,\xi))&\\
&=
\left(\tau,\xi,e^{-\tau}\frac{\pa\sigma}{\pa\xi}(\tau,\xi),e^{-\tau}\frac{\pa\sigma}{\pa \tau}(\tau,\xi)\right),&\text{ if }(\xi,0,0)\in B_0.\\
\end{alignat*}

Note that $G|_{B_0}$ and $G|_{B_1}$ are Lagrangian embeddings with respect to the symplectic form $\omega=d(e^t(dz-ydx))$. This is obvious for $G|_{B_1}$, for $G|_{B_0}$ we calculate 
\begin{align*}
 &G^*(d(e^t(dz-ydx))=d\left(e^\tau d\left(e^{-\tau}\frac{\pa\sigma}{\pa \tau}\right)- \frac{\pa\sigma}{\pa\xi}d\xi\right)=
d\left(-\frac{\pa\sigma}{\pa\tau}d\tau-\frac{\pa\sigma}{\pa\xi}d\xi\right)=0.
\end{align*}
We also have
$G(-\eps,\xi)=(-\eps,\xi,0,-\delta)=(-\eps,\wt f_{-\eps})$ and $G(0,\xi)=
\left(0,\xi,\frac{\p\phi}{\p \xi},\phi\right)=(0,\wt f_{0}).$
On the other hand,    the last property of the function $\sigma$ guarantees that $G(B_0)$ and $G(B_1)$ intersect transversely at a unique point and that  the index of intersection is the same as the self-intersection index of the regular homotopy $f_t$.
\end{proof}
           
\subsection{Lagrangian immersions with a conical point}
 Let $S^{2n-1}_\st=(S^{2n-1},\xi_\st)$  be  the  sphere with the standard contact structure  $\xi_\st$ defined by the restriction 
 $\alpha_\st$  to the unit sphere $S^{2n-1}\subset\R^{2n}$ of   the Liouville form $\lambda_\st:=\frac12\sum_{j=1}^n(x_jdy_j-y_jdx_j)$ in $\R^{2n}_\st$.
 
For any integer $m$ we denote by 
$(r,x)\in (0,\infty)\times S^{m-1}$   polar coordinates in
$\R^{m}-\{0\}$, i.e.~$x$ is the radial projection of a point
to the unit sphere, and $r$ is its distance to the origin.  The 
symplectic form $\omega_\st$ in $\R^{2n}$ has the form $d(r^2\alpha_\st)$ in polar coordinates.

A map $h\colon\R^n\to\R^{2n}_\st$ is called a \emph{Lagrangian cone} if $h^{-1}(0)=0$ and   if it is given by the formula
$h(r,x)=(cr^2,\phi(x))$ in polar coordinates, where $\phi\colon S^{n-1}\to S^{2n-1}_\st$ is a Legendrian embedding and $c$ is a positive constant.
 
Note that there exists a symplectomorphism
\begin{equation}\label{eq:symlectization-cone}
 C\colon (\R\times S^{2n-1}, d(e^t\alpha_\st))\to (\R^{2n}-\{0\},\om_\st)
\end{equation}
given by the formula
 $C(t,x)=\left(e^{\frac t2},x\right)$ in polar coordinates. Under this symplectomorphism
 Lagrangian cones in $\R^{2n}_\st$ correspond to cylindrical Lagrangian manifolds in the symplectization $(\R\times S^{2n-1}, d(e^t\alpha_\st))$ of $S^{2n-1}_\st$.

Let $L$ be an $n$-dimensional manifold and $X$ a $2n$-dimensional symplectic manifold.
A map $f\colon L\to X$ is called a \emph{Lagrangian immersion with a conical point} at $p\in L$, if $f|_{L-\{p\}}$ is a Lagrangian immersion, and if in a neighborhood of $p$ and in a Darboux chart  around $f(p)$, the map is equivalent to a Lagrangian cone $h\colon\R^n\to \R^{2n}_{\st}$ near the origin. The Legendrian   embedding   $\phi\colon S^{n-1}\to S^{2n-1}_\st$ corresponding to this cone is called  the \emph{link} of the conical point.

We define a regular Lagrangian homotopy $f_t\colon L\to X$, $t\in[0,1]$, of immersions with a conical point at $p$ to be a homotopy which is  fixed in some neighborhood of the singular point $p$ and that is an ordinary regular Lagrangian homotopy when restricted to $L-\{p\}$.  
For a self-transverse immersion $f$ with a conical point at $p$, we define the self-intersection    number $I(f)$ as $I(f|_{L\setminus\{p\}})$. Then $I(f)$ is invariant under regular homotopies fixed near $p$. 

 {Let  $g\colon\R^n\to\R^{2n}_\st$ be a Lagrangian immersion with a conical singularity at the origin $0\in\R^n$, and  which also coincides with a Lagrangian cone over a Legendrian link $\phi\colon S^{n-1}\to S^{2n-1}_\st$ outside a compact set.
Note that given a path  $\gamma\colon[0,1]\to\R^n$ connecting a point $\gamma(0)\in\R^n$ near infinity (i.e.~where the immersion is conical) and the origin $\gamma(1)=0\in\R^n$, the integral 
$\int_{g\circ\gamma}\lambda_\st$ is independent of the choice of $\gamma$. We will call it the {\em action of the singularity $0$ with respect to infinity}, and denote it by $a(g,0|\infty)$. 
Let $g_t\colon\R^n\to\R^{2n}_\st$, with $g_0 = g$, denote a Lagrangian regular homotopy,  compactly supported away from $0$.  Then $g_t$ is Hamiltonian if and only if $a(g_t,0|\infty)=\const$.
\begin{Lemma} \label{lm:compensation}
For any $\eps>0$, any smooth real-valued function $c\colon[0,1]\to\R$, $c(0)=0$, and any  Lagrangian cone  $h\colon\R^n\to\R^{2n}_\st$  over a Legendrian link $\phi\colon S^{n-1}\to S^{2n-1}_\st$, there exists a Lagrangian isotopy
$h_t\colon\R^n\to \R^{2n}_\st$ beginning at $h_0=h$, fixed near the singularity and outside the ball of radius $\eps$, and such that $a(h_t,0|\infty)=c(t)$, $t\in[0,1]$.
\end{Lemma}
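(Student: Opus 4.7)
The plan is to take $h_t := \Phi_t \circ h$, where $\Phi_t$ is the Hamiltonian flow on $\R^{2n}_\st$ of a time-dependent Hamiltonian $H_t$ engineered to be locally constant on two disjoint open sets—one covering the portion of the cone near the singularity, one covering the portion near infinity—with a controlled difference of values.

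First I would fix radii $0 < r_1 < r_2 < \eps$ and set $A_{\text{in}} := h(\{|x| \le r_1\})$ and $A_{\text{out}} := h(\{|x| \ge r_2\})$. Since $h$ is a Lagrangian cone over the embedded Legendrian link $\phi$, the restriction $h|_{\R^n \setminus \{0\}}$ is an embedding, so $A_{\text{in}}$ and $A_{\text{out}}$ are disjoint closed subsets of $\R^{2n}$. Pick disjoint open neighborhoods $V_{\text{in}} \supset A_{\text{in}}$ and $V_{\text{out}} \supset A_{\text{out}}$ and a smooth function $\chi\colon \R^{2n} \to [0,1]$ with $\chi \equiv 1$ on $V_{\text{in}}$ and $\chi \equiv 0$ on $V_{\text{out}}$; set $H_t := c'(t)\,\chi$, and let $\Phi_t$ be its Hamiltonian flow. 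Because $H_t$ is locally constant on $V_{\text{in}} \cup V_{\text{out}}$, the vector field $X_{H_t}$ vanishes there and $\Phi_t$ restricts to the identity on those open sets. Consequently $h_t = h$ on $\{|x| \le r_1\} \cup \{|x| \ge r_2\}$, so the isotopy is fixed near the singularity and outside $B_\eps$, and $h_t$ remains a Lagrangian immersion with conical point at $0$ because $\Phi_t$ is a symplectomorphism acting trivially near the cone vertex.

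To verify the action condition, pick a path $\gamma\colon [0,1] \to \R^n$ from some $p_\infty$ with $|p_\infty| > \eps$ to $0$. Cartan's formula gives $\mathcal{L}_{X_{H_t}}\lambda_\st = d(\iota_{X_{H_t}}\lambda_\st + H_t)$, so integrating over $\gamma$ and using that $X_{H_t}$ vanishes at the constant endpoints $h_t(p_\infty) = h(p_\infty) \in V_{\text{out}}$ and $h_t(0) = 0 \in V_{\text{in}}$ yields
$$
\frac{d}{dt}\int_{h_t \circ \gamma}\lambda_\st \;=\; H_t(0) - H_t(h(p_\infty)) \;=\; c'(t) - 0 \;=\; c'(t).
$$
Moreover $a(h, 0|\infty) = 0$, since $\lambda_\st = \tfrac{1}{2}\sum(x_j dy_j - y_j dx_j)$ vanishes on radial directions in $\R^{2n}$, and those are exactly the directions along which the cone $h$ is swept out. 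Integrating in $t$ then gives $a(h_t, 0|\infty) = c(t) - c(0) = c(t)$, as required.

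The one ancillary point is the well-definedness of $a(\cdot, 0|\infty)$ as a path-independent quantity: two admissible paths differ by a loop in $\R^n$ which, since $n > 2$, bounds a disk avoiding the cone vertex, and the Lagrangian condition $h^*\omega_\st = 0$ kills the corresponding surface integral via Stokes. I do not anticipate any genuine obstacle beyond these standard verifications; the main content of the lemma is the bookkeeping observation that a Hamiltonian whose pullback to the Lagrangian jumps by a prescribed amount between the two ends of $\gamma$ shifts the singularity-to-infinity action by exactly that amount.
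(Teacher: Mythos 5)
Your proof is correct, but it takes a genuinely different route from the one in the paper. The paper works inside the symplectization: it modifies the cylindrical Lagrangian over $\phi$ by composing with the Reeb flow $R^{T\beta(e^s)}$ cut off in the symplectization coordinate $s$, and then computes the resulting change of action by a direct integral, obtaining $Te^{-E}$. Your proof instead uses an ambient compactly supported Hamiltonian diffeomorphism of $\R^{2n}_\st$ engineered so that the Hamiltonian is locally constant near the vertex region and the far cone region, with a prescribed jump in value between them; you then recover the action change from Cartan's formula. Both approaches exploit the same mechanism (an annular region of the cone across which the Liouville primitive is shifted by a prescribed amount), but yours is more abstract and uses a reusable observation, while the paper's is explicit in symplectization coordinates. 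One small point to make explicit in your version: you should choose $\chi$ compactly supported (which is possible since $A_{\text{in}}$ is compact and disjoint from the closed set $A_{\text{out}}$, so $V_{\text{in}}$ can be taken bounded), so that $X_{H_t}$ is complete and the flow $\Phi_t$ exists for all $t\in[0,1]$; also note that your formula $\mathcal{L}_{X_{H_t}}\lambda_\st = d(\iota_{X_{H_t}}\lambda_\st + H_t)$ presupposes the sign convention $\iota_{X_H}\omega_\st = dH$, and with the opposite convention one simply uses $H_t = -c'(t)\chi$. Neither of these affects the soundness of the argument.
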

\begin{proof}
Let $R^s\colon S^{2n-1}\to S^{2n-1}$, $s\in\R$, denote the time $s$ Reeb flow of the contact form $\alpha_\st$. Fix a non-positive $C^\infty$-function $\beta\colon\R\to\R$  with the following properties:
\begin{itemize}
\item  $\beta(s)=0$ for $s\notin(\frac1e,e)$;
\item $\int\limits_{ 1/e}^e \beta(u)du=-1$.
\end{itemize}
 Given $T,E\in\R$ let  
$$
g_{T,E}\colon\R\times S^{n-1}\to 
(\R\times S^{2n-1}_\st,d(e^t\alpha_\st))
$$ 
be the Lagrangian embedding given by the formula
$$
g_{T,E}(s,x)=(s-E,R^{T\beta(e^s)}(\phi(x))).
$$
Then, for $x_0\in S^{n-1}$,
\begin{align*}
\int\limits_{\R\times x_0}\left(g_{T,E}\right)^*(e^t\alpha_\st) &= Te^{-E}\int\limits_{-\infty}^\infty e^{2s} \beta'(e^s)ds = Te^{-E}\int\limits_{-1}^1 e^{2s} \beta'(e^s)ds\\ 
&=
Te^{-E}\int\limits_{\frac1e}^e u \beta'(u)du=-Te^{-E}\int\limits_{\frac1e}^e\beta(u)du=Te^{-E}.
\end{align*}
Then the required Lagrangian isotopy $h_t\colon\R^n\to \R^{2n}_\st$ 
with a conical singularity at the origin can be defined as
$$
h_t(x)=
C\circ g_{ {c(t)}/\eps,E},
$$
for $E=-2\log\eps$, $t\in[0,1]$, and $x\in\Lambda$.
\end{proof}
}

We will need the following result. 
\begin{Lemma}\label{lm:near-cone}
Let $h\colon\R^n\to\R^{2n}_\st$ be a Lagrangian cone over a Legendrian link $\phi\colon S^{n-1}\to S^{2n-1}_\st$. Then there exists a Hamiltonian regular homotopy $h_t\colon\R^n\to\R^{2n}_\st$, $t\in[0,1]$, with $h_0=h$,  which is fixed near the singularity and outside of a ball $B_R\subset\R^{n}$ of some radius $R>0$ centered at $0$, and such that the following hold:
\begin{itemize}
 \item $h_1$ coincides with the cone over a loose Legendrian knot $\wt\phi$  near $\p B_{\frac R2}$;
 \item the immersion $ h_1^-:=h_1|_{h_1^{-1}( B_{R/2}) }$
 has exactly one transverse self-intersection point;
 \item if $n=2k$ then, for any $\phi$, we can arrange that $I(h_1^-) =(-1)^{k-1}$, and if in addition $\phi$ is assumed to be a loose Legendrian knot, then we can arrange also that 
$I(h_1^-)=(-1)^{k}$.
   \end{itemize}
\end{Lemma}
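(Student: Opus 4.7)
The strategy is to modify $h$ inside $B_R$ so that its Legendrian link becomes a loose knot $\tilde\phi$, by combining the stabilization construction of Lemma \ref{lm:stab} with the Lagrangian suspension of Lemma \ref{lm:Leg-Lag-imm}. Specifically I will apply Lemma \ref{lm:Leg-Lag-imm} twice: once to change the link from $\phi$ to $\tilde\phi$ as $r$ increases through the inner part of $B_{R/2}$, and once more to change the link back from $\tilde\phi$ to $\phi$ in the annular zone $B_R\setminus B_{R/2}$, where additional double points are allowed by the statement.

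\emph{Choice of $\tilde\phi$ and regular Legendrian homotopy.} To achieve the sign $(-1)^{k-1}$ when $n=2k$ (or either sign when $n$ is odd), I set $\tilde\phi:=\phi^U$ where $U$ is an $(n-1)$-ball: Lemma \ref{lm:stab} then gives a regular Legendrian homotopy $\phi_s$, $s\in[0,1]$, from $\phi_0=\phi$ to $\phi_1=\tilde\phi$ with exactly one transverse double point of sign $(-1)^{k-1}$, and $\tilde\phi$ is loose by the Remark after Theorem \ref{prop:Murphy}. To realize the opposite sign $(-1)^k$ under the extra hypothesis that $\phi$ is already loose, I apply Corollary \ref{cor:loose-stab-loose} to write $\phi=(\phi')^U$ for some loose Legendrian $\phi'$, set $\tilde\phi:=\phi'$, and reverse the associated regular homotopy; the identity $I(\{h_t\})=-I(\{h_{1-t}\})$ for $n$ even flips the sign to $(-1)^k$, yielding once more a single transverse double point of the prescribed sign, with $\tilde\phi$ still loose.

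\emph{Construction of $h_1$ via two cylinders.} Apply Lemma \ref{lm:Leg-Lag-imm} to $\phi_s$ to obtain a Lagrangian cylinder $G_{\rm in}\colon \R\times S^{n-1}\to \R\times S^{2n-1}_\st$ that equals $\R\times\phi$ for $t\ll 0$, equals $\R\times\tilde\phi$ for $t\gg 0$, and carries exactly one transverse double point of the prescribed sign. Transfer to $\R^{2n}\setminus\{0\}$ through the symplectomorphism $C$ of \eqref{eq:symlectization-cone}, and rescale so that the support of $G_{\rm in}$ lies in the radial shell $\{R/8\le r\le R/4\}$; on $\{R/4\le r\le 3R/4\}$ insert the literal cone over $\tilde\phi$. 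Apply Lemma \ref{lm:Leg-Lag-imm} again to the reverse homotopy $\phi_{1-s}$, obtaining a second cylinder $G_{\rm out}$ placed in $\{3R/4\le r\le 7R/8\}$ which matches the cone over $\tilde\phi$ at its inner boundary and the cone over $\phi$ at its outer boundary; the double point of $G_{\rm out}$ lies in the annulus $B_R\setminus B_{R/2}$ and is not counted in $h_1^-$. For $r\le R/8$ and $r\ge 7R/8$ leave $h$ unchanged. The resulting $h_1$ is conical over $\tilde\phi$ near $\partial B_{R/2}$, agrees with $h$ near $0$ and outside $B_R$, and $h_1^-$ has exactly one transverse double point of the prescribed sign, as required.

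\emph{The full homotopy and the Hamiltonian condition.} To extend $h_1$ to a regular homotopy $h_t$, I parameterize each construction by $t\in[0,1]$: replace the function $\phi(x)-h(x)$ of Lemma \ref{lm:stab} by the family $t\bigl(\phi(x)-h(x)\bigr)$ (taking $t=0$ to give the trivial cylinder, so that $h_0=h$), and lift in a $t$-dependent family via Lemma \ref{lm:Leg-Lag-imm}. Because $L=\R^n$ satisfies $H^1(L)=0$, every closed $1$-form $\alpha_t$ on $L$ is exact, so $[\alpha_t]=0$ for all $t$ and $h_t$ is automatically Hamiltonian. The main technical point will be matching the two Lagrangian cylinders smoothly with the literal cones over $\phi$ and $\tilde\phi$ on their boundaries and confirming that the double point of $G_{\rm in}$ actually lies in $B_{R/2}$ while that of $G_{\rm out}$ does not; this is controlled by choosing the parameter $T$ of Lemma \ref{lm:Leg-Lag-imm} small compared to the radial scales produced by $C$, using that the cylinders are $C^\infty$-close to the trivial ones outside a compact interval of $t$.
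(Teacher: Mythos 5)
Your geometric construction is essentially identical to the paper's: stabilize (or, in the loose case, destabilize via Corollary~\ref{cor:loose-stab-loose}) the link $\phi$ to a loose $\wt\phi$, lift the one-double-point Legendrian regular homotopy of Lemma~\ref{lm:stab} to Lagrangian cylinders via Lemma~\ref{lm:Leg-Lag-imm}, transport them through the cone symplectomorphism $C$ of \eqref{eq:symlectization-cone}, glue one cylinder going $\phi\to\wt\phi$ inside $B_{R/2}$ and the reverse cylinder $\wt\phi\to\phi$ in the outer annulus, and insert a literal cone over $\wt\phi$ in between. The sign bookkeeping in even dimensions is also correct. This all matches the paper.

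The gap is in your treatment of the Hamiltonian condition. You assert that because $H^1(\R^n)=0$, every closed $1$-form $\alpha_t$ is exact and so $h_t$ is ``automatically Hamiltonian.'' That is not the relevant condition here. For a Lagrangian regular homotopy $g_t\colon\R^n\to\R^{2n}_\st$ with a conical singular point that is compactly supported away from $0$ (fixed near the singularity and outside a compact set), the paper explicitly notes that $g_t$ is Hamiltonian if and only if the \emph{action of the singularity with respect to infinity}, $a(g_t,0|\infty)=\int_{g_t\circ\gamma}\lambda_\st$, is constant in $t$. Cohomological exactness of $\alpha_t$ on $\R^n$ does not control this quantity: a primitive of $\alpha_t$ is only locally constant near $0$ and near infinity, and the two constants may differ, which is exactly the change in action. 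Concretely, the cylinders $G_{\rm in}$, $G_{\rm out}$ you glue together to form $h_1$ generically do \emph{not} yield $a(h_1,0|\infty)=0=a(h,0|\infty)$, so the resulting homotopy $h_t$ need not be Hamiltonian in the required sense. The paper fixes this using Lemma~\ref{lm:compensation}: after the gluing one applies a Reeb-twist isotopy supported in a thin annulus to reset the action of $H_{12}$ (and $H_{43}$) to zero, and then uses Lemma~\ref{lm:compensation} again to build a connecting homotopy with constant action. This is precisely the issue credited to Zamorzaev in the Acknowledgements, and it is what your proposal leaves unaddressed.

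A secondary, smaller issue: your suggestion to parameterize the homotopy by replacing $\phi(x)-h(x)$ with $t\bigl(\phi(x)-h(x)\bigr)$ in Lemma~\ref{lm:stab} and lifting ``in a $t$-dependent family'' is not a well-posed instruction; the paper simply constructs the endpoint $H_{12}$ and then (after the action correction) produces the connecting homotopy separately. But this is a matter of exposition; the substantive gap is the action argument above.
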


\begin{Remark} 
Note that by scaling we can make the radius of the ball $R$ arbitrarily small.
\end{Remark}

\begin{proof}
Let $\phi\colon S^{n-1}\to S^{2n-1}_\st$ be the Legendrian link of the conical point. Let us denote by $\phi_1$ its stabilization. Note that $\phi_1$ is a  loose knot. If $\phi$ is itself loose then according to Corollary \ref{cor:loose-stab-loose} there exists   another  loose Legendrian knot
$\phi_{-1}$ such that $\phi$ is the stabilization of $\phi_{-1}$. We will call $\phi_{-1}$ the \emph{destabilization} of $\phi$.
According to  Lemma \ref{lm:stab} the embeddings $\phi_0:=\phi$ and $\phi_1$ can be included into   a regular Legendrian homotopy
$\phi_t\colon S^{n-1}\to S^{2n-1}_\st$, $t\in[0,1]$, such that  there is exactly one self-intersection point  for
$t\in(0,1)$, and  when $n=2k$ we have
$I(\{\phi_t\}_{t\in[0,1]})=(-1)^{k-1}$.
Similarly, if $\phi$ is loose then $\phi_{-1}$ and $\phi_0=\phi$
can be included into   a regular Legendrian homotopy
$\phi_t\colon S^{n-1}\to S^{2n-1}_\st$, $t\in[-1,0]$, with  one transverse  self-intersection point  which for an even    $n=2k$ has index
$(-1)^{k-1}$.

Next, we use Lemma \ref{lm:Leg-Lag-imm} to lift the 
Legendrian regular homotopy
$\{\phi_t\}_{t\in[0,1]}$ and its inverse $\{\phi_{1-t}\}_{t\in[0,1]}$, and in the loose case  $\{\phi_t\}_{t\in[-1,0]}$ and its   inverse 
$  \{\phi_{-t}\}_{t\in[0,1]}$, respectively, to
Lagrangian immersions
$$
G_1,G_2,G_3,G_4\colon \R\times S^{n-1}\to (\R\times S^{2n-1}, d(e^t\alpha_\st))
$$ 
with the following properties
\begin{itemize}
\item for a sufficiently large positive $t$ we have
\begin{align*}
&G_1(-t,x)=(-t,\phi(x)), \quad\quad G_1(t,x)=(t,\phi_1(x)),\\
&G_2(-t,x)=(-t,\phi_1(x)), \quad\;\; G_2(t,x)=(t,\phi(x)),\\
&G_3(-t,x)=(-t,\phi_{-1}(x)), \quad G_3(t,x)=(t,\phi(x)),\\
&G_4(-t,x)=(-t,\phi(x)), \quad\quad G_4(t,x)=(t,\phi_{-1}(x));
\end{align*}
\item each of these Lagrangian immersions has exactly 1 transverse self-intersection point;
\item if $n=2k$ then 
$$
I(G_1)=I(G_3)=(-1)^{k-1}, \quad
I(G_2)=I(G_4)=(-1)^k.
$$
\end{itemize}

Composing these immersions with the symplectomorphism $C^{-1}$ from
 \eqref{eq:symlectization-cone}  (and compactifying  with a conical point) we get Lagrangian immersions
$$
H_1,H_2, H_3,H_4\colon\R^n\to\R^{2n}_\st
$$
with a conical singularity at the origin. With appropriate rescaling we can glue together the immersions
 $H_1$ and $H_2$  to get  an immersion $H_{12}\colon\R^n\to\R^{2n}_\st$, and in the loose case glue $H_4$ and $H_3$ to get  an immersion $H_{43}\colon\R^n\to\R^{2n}_\st$, such that
 both are immersions with a  conical singularity with Legendrian link $\phi$, and both coincide with the cone over $\phi$ outside of the a ball $B_R$ of some radius $R>0$. In addition, near $\p B_{R/2}$  
 the immersion  $H_{12}$ coincides with the cone over $\phi_1$ and 
 $H_{43}$ coincides with the cone over $\phi_{-1}$. 
{  Using Lemma \ref{lm:compensation} we can modify the immersions $H_{12}$ and $H_{43}$ to arrange that $a(H_{12},0|\infty)= 
a(H_{43},0|\infty)=0$. Again with the help of Lemma \ref{lm:compensation}, we can therefore construct a 
 regular Hamiltonian homotopy $h_t$  connecting $h$ with $H_{12}$, which has   the required properties in the general case.} In the case  of loose $\phi$ and even $n=2k$, we can construct a Lagrangian  immersion with the index of its unique self-intersection point equal to $(-1)^k$, by taking  $h_t$ to be a  regular Hamiltonian homotopy connecting $h$ with $H_{43}$.
\end{proof}
 
\subsection{The $h$-principle for self-transverse Lagrangian immersions with a conical singularity and minimal self-intersection}\label{sec:con-h}
Let $X$ be a symplectic $2n$-manifold of dimension $2n>4$. The following $h$-principle for Lagrangian {\it embeddings} into $X$ with a conical point is proved in \cite{EliMur}.
\begin{thm}\label{thm:caps-embedded}
Let $f_0\colon L\to X$ be a Lagrangian immersion with a conical point $p\in L$ into a  simply connected symplectic manifold $X$ of dimension $2n>4$. If $n=3$ we further   assume that $X\setminus 
f_0(L)$ has infinite Gromov width, i.e. admits a symplectic embedding of an arbitrarily large ball.
If the Legendrian link of $f_0$ at $p$ is loose and if $I(f_0)=0$, then there exists a Hamiltonian regular homotopy $f_t\colon L\to X$, $t\in [0,1]$, that is fixed in a neighborhood of $p$ and that connects $f_0$ to a Lagrangian embedding $f_1$ with a conical point at $p$.
\end{thm}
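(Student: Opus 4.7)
The plan is to cancel the double points of $f_0$ in pairs, using the looseness of the Legendrian link at $p$ as a reservoir of flexibility. Since $I(f_0)=0$ and $X$ is simply connected, after a preliminary generic perturbation the double points group into pairs $(q^+, q^-)$ of opposite Whitney sign (or simply into pairs of double points in the mod $2$ case), each joined by a Whitney arc $\gamma \subset L$ and bounding an immersed Whitney disc $D \subset X$. The serious problem is that $D$ need not be realizable as a Lagrangian Whitney move in dimension $2n\ge 6$; this is the obstacle that the looseness hypothesis is designed to sidestep.

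First I would reduce each cancellation to a local problem near $p$. Using the cobordism construction of Lemma \ref{lm:Leg-Lag-imm} along a tube over an arc in $L$ joining the support of the pair to $p$, one isotopes $f_0$ by a compactly supported Lagrangian regular homotopy that drags the pair of double points into an arbitrarily small ball centered at $p$, producing an immersion that agrees with $f_0$ outside of this ball. Then I would apply Lemma \ref{lm:near-cone} to replace $f_0$ near $p$ by the cone over a stabilized (and hence loose) Legendrian link, introducing controlled auxiliary double points with prescribed Whitney signs; these are chosen to be matched with the imported pair. Throughout, Lemma \ref{lm:compensation} is used to ensure all modifications stay Hamiltonian by absorbing the shifts of primitive along the conical direction.

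The cancellation is then reduced to a loose Legendrian problem on $S^{n-1}$. By Lemma \ref{lm:stab} there exists a Legendrian regular homotopy of the (loose) link to itself whose self-intersection data realizes the opposite of the pair we want to cancel; the composition of the cobordism from Lemma \ref{lm:near-cone} with the lift of this homotopy via Lemma \ref{lm:Leg-Lag-imm} gives a Lagrangian cylinder in the symplectization with no double points, glued into the conical neighborhood. That the resulting piece is actually Legendrian-isotopic to $f_0$ (rather than just formally so) is the content of Theorem \ref{prop:Murphy}: the hypothesis $I(f_0)=0$ forces the formal Legendrian isotopy class to be trivial, after which Murphy's $h$-principle promotes formal to genuine. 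Iterating pair by pair yields the desired Hamiltonian regular homotopy of $f_0$ to an embedding fixed near $p$.

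The main obstacle is preserving the Hamiltonian condition, i.e.\ the cohomology class $[\iota_{\partial/\partial t} F^*\omega] \in H^1(L)$, at each gluing. The cobordisms from Lemma \ref{lm:Leg-Lag-imm} are exact, but the primitive jumps when they are spliced into the conical neighborhood and composed across pairs; Lemma \ref{lm:compensation} supplies exactly the one-parameter freedom needed to match primitives at every stage. A more delicate obstacle specific to $n=3$ is that, in real dimension six, $J$-holomorphic disc counts could in principle obstruct the global Whitney cancellations; the infinite Gromov width assumption on $X\setminus f_0(L)$ is exploited to rescale the whole local construction into a symplectic ball of arbitrarily large capacity, inside which any obstructing disc would have unbounded area and therefore cannot exist.
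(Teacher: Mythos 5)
This statement is not proved in the present paper at all: it is imported verbatim from \cite{EliMur} (\emph{Lagrangian caps}), with the line ``The following $h$-principle for Lagrangian \emph{embeddings} into $X$ with a conical point is proved in \cite{EliMur}.'' So your attempt cannot be compared to a proof contained in the paper; it has to be judged on its own merits as an attempted reconstruction of the \cite{EliMur} argument.

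On those merits, the sketch points in the right general direction --- looseness of the link as the source of flexibility, the relevance of Theorem~\ref{prop:Murphy}, the role of Lemma~\ref{lm:compensation} in keeping the homotopy Hamiltonian, and the role of the Gromov-width hypothesis for $n=3$ --- but it does not close the central gap. The crux of the theorem is the Lagrangian Whitney trick: given a pair of oppositely-signed double points of a Lagrangian immersion, one needs a \emph{Lagrangian} Whitney disc to cancel them, and such a disc is not produced by general-position topology. Your argument never constructs it. The step where you compose ``the cobordism from Lemma~\ref{lm:near-cone} with the lift of this homotopy via Lemma~\ref{lm:Leg-Lag-imm}'' to obtain ``a Lagrangian cylinder in the symplectization with no double points'' is exactly where the cancellation has to happen, and there is no reason the self-intersections of the two pieces match up and cancel: Lemma~\ref{lm:stab} and Lemma~\ref{lm:Leg-Lag-imm} produce a regular homotopy with a prescribed \emph{count} of double points, not a mechanism for annihilating the two existing ones. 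Similarly, Theorem~\ref{prop:Murphy} is a statement about Legendrian isotopies of a fixed $(n-1)$-manifold; it does not by itself upgrade a Lagrangian immersion of an $n$-manifold to an embedding, and the claim that ``$I(f_0)=0$ forces the formal Legendrian isotopy class to be trivial'' is not how the hypothesis is used. The actual proof in \cite{EliMur} works not by iterated pairwise cancellation at the conical point but by a global flexibility argument (wrinkled Lagrangians / holonomic approximation) that directly produces an embedded Lagrangian cap with loose Legendrian boundary, after which Murphy's $h$-principle is applied to the boundary data; the $n=3$ Gromov-width assumption is needed because of a genuine failure of the parametric $h$-principle in real dimension six, not because ``any obstructing disc would have unbounded area.'' Also note that the preliminary step ``drag the pair of double points into an arbitrarily small ball centered at $p$'' is itself non-trivial for Lagrangian immersions: a compactly supported Lagrangian regular homotopy moving a double point changes areas and primitives in ways that must be reconciled before Lemma~\ref{lm:compensation} can even be invoked. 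In short, the proposal identifies the cast of characters but not the mechanism; the step that actually does the cancellation is left unjustified, and that is precisely the content of \cite{EliMur}.
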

 
As we shall see,  Theorem \ref{thm:caps-embedded} generalizes to self-transverse Lagrangian immersions with a conical point of non-zero Whitney index and with the minimal number of self-intersection points. In fact, Theorem \ref{thm:caps-embedded} itself is the key ingredient in the proof of its generalization, which we state next.
  
\begin{thm}\label{cor:caps-immersed}
Let $f_0\colon L\to X$ be a Lagrangian immersion with a conical point $p\in L$ into a  simply connected symplectic manifold $X$ of dimension $2n>4$.  If $n=3$ we further assume  that $X\setminus 
f_0(L)$ has infinite Gromov width.  If the Legendrian link of $f_0$ at $p$ is loose, then there exists a Hamiltonian  regular homotopy $f_t\colon L\to X$, $t\in [0,1]$, that is fixed in a neighborhood of $p$ and that connects $f_0$ to a self-transverse Lagrangian immersion $f_1$ with a conical point at $p$ and with $\SI(f_1)=|I({f_0})|$.
\end{thm}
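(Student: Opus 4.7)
The plan is a three-step sandwich argument that reduces the theorem to Theorem \ref{thm:caps-embedded} by means of Lemma \ref{lm:near-cone}. Set $d = I(f_0)$; the case $d=0$ is exactly Theorem \ref{thm:caps-embedded}, so I may assume $d\neq 0$. After a small self-transverse perturbation of $f_0$ supported away from $p$, the first step is to introduce $|d|$ extra transverse double points of sign $-\sign(d)$ near $p$ by iterating Lemma \ref{lm:near-cone} in a sequence of nested, shrinking concentric balls $B_{R_1}\supset B_{R_2}\supset\dots\supset B_{R_{|d|}}$ centered at $p$. Each iteration takes as input the (preserved) loose cone link $\phi$ and produces one double point of the required sign; both sign choices are available in the even-dimensional case precisely because $\phi$ is loose. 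Since each application of Lemma \ref{lm:near-cone} is fixed in a neighborhood of the singularity, the cone structure over $\phi$ at $p$ survives, and the next iteration can be carried out in a still smaller ball. The concatenation is a Hamiltonian regular homotopy from $f_0$ to an immersion $\tilde f_0$ with $\SI(\tilde f_0)=\SI(f_0)+|d|$, with $I(\tilde f_0)=d-|d|\sign(d)=0$, and with the same loose conical link $\phi$ at $p$.

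The second step is to apply Theorem \ref{thm:caps-embedded} to $\tilde f_0$. Its hypotheses are met: $I(\tilde f_0)=0$, the link is still loose, and for $n=3$ the complement $X\setminus \tilde f_0(L)$ inherits infinite Gromov width from $X\setminus f_0(L)$ because the modification of Step 1 is supported in an arbitrarily small ball around $p$, leaving large balls far from $p$ untouched. This yields a Hamiltonian regular homotopy, fixed near $p$, from $\tilde f_0$ to a Lagrangian embedding $\tilde f_1$ with conical point at $p$. The third step mirrors the first: iterate Lemma \ref{lm:near-cone} $|d|$ more times near $p$ in shrinking concentric balls, now creating double points of sign $\sign(d)$. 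The resulting immersion $f_1$ has $\SI(f_1)=|d|$ and $I(f_1)=|d|\sign(d)=d=I(f_0)$, as required. Concatenating the three Hamiltonian regular homotopies produces the desired homotopy from $f_0$ to $f_1$.

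The main obstacle will be legitimizing the iteration in Steps 1 and 3 — specifically, verifying that after each application of Lemma \ref{lm:near-cone} the modified immersion still contains a genuine cone neighborhood of $p$ in which a subsequent, smaller-radius application can be performed. This is delivered by the "fixed near the singularity" clause in Lemma \ref{lm:near-cone}, which keeps the cone over $\phi$ intact in some small ball around $p$ no matter how many times the construction has already been iterated. A secondary bookkeeping point is sign control for the newly introduced double points; this follows directly from the dichotomy of achievable signs in Lemma \ref{lm:near-cone} once $\phi$ is known to be loose. In the case where $n$ is odd or $L$ is non-orientable, signs are irrelevant and $|d|\in\{0,1\}$, so the iteration is trivial and the argument collapses to at most one application of Lemma \ref{lm:near-cone} on each side of Theorem \ref{thm:caps-embedded}.
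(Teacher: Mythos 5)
Your argument breaks at the very first step, because Lemma \ref{lm:near-cone} does \emph{not} create a single double point of a chosen sign --- it creates a \emph{pair} of double points of opposite signs, only one of which (the one landing in $B_{R/2}$) has controllable sign. This is forced by the invariance of the Whitney index: the homotopy $h_t$ in Lemma \ref{lm:near-cone} is a Hamiltonian \emph{regular homotopy}, fixed near the singularity and outside $B_R$, so $I(h_1)=I(h_0)=0$; since $h_0$ is a cone with no double points, any double points of $h_1$ must cancel in signed count. In the proof, this is visible as the gluing $H_{12}$ of $H_1$ (one double point, sign $(-1)^{k-1}$) and $H_2$ (one double point, sign $(-1)^{k}$): the inner double point lies in $B_{R/2}$ and the compensating one lies in $B_R\setminus B_{R/2}$. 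Consequently your Step 1 produces $\tilde f_0$ with $\SI(\tilde f_0)=\SI(f_0)+2|d|$ and, crucially, $I(\tilde f_0)=I(f_0)=d\neq 0$; Theorem \ref{thm:caps-embedded} then does not apply because its hypothesis $I=0$ fails. The same misreading makes Step 3 produce $2|d|$ rather than $|d|$ double points. More fundamentally, no Hamiltonian regular homotopy fixed near $p$ can change $I$ at all, so the intended reduction to the $I=0$ case of Theorem \ref{thm:caps-embedded} cannot be achieved by the method you describe.

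The paper's proof sidesteps this by a discontinuous ``excise-and-replace'' step that your proposal omits. After applying Lemma \ref{lm:near-cone} to obtain $\wh f_0$, with the chosen-sign double point inside $B_{\eps/2}$ and the compensating one in $B_\eps\setminus B_{\eps/2}$, the paper replaces $\wh f_0$'s image inside $B_{\eps/2}$ by the pure cone over $\wt\phi$, producing a new conical immersion $\wt f_0$. This replacement is \emph{not} a regular homotopy; it drops exactly the chosen-sign double point, so $|I(\wt f_0)|=d-1$. The induction hypothesis is then applied to $\wt f_0$ (which now has a \emph{different} link $\wt\phi$ at $p$, also loose), and at the end the excised piece is scaled down and glued back inside a tiny ball $B_\sigma$ where the inductive homotopy is fixed. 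Unwinding the induction, the $d$ retained double points end up stacked near $p$ in nested annuli, and Theorem \ref{thm:caps-embedded} is invoked once at the base case $|I|=0$ to make the rest an embedding. To repair your proposal you would have to incorporate this excise-and-replace step --- at which point you essentially recover the paper's inductive argument rather than a genuinely distinct ``three-step sandwich.''
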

  
\begin{proof}
We argue by induction on $d=|I({f_0})|$, using Theorem \ref{thm:caps-embedded} as the base of the induction for $d=0$.
Suppose the theorem holds for $|I({f_0})|=d-1$.
Let $\phi\colon S^{n-1}\to S^{2n-1}_\st$ be the loose  Legendrian link of the conical point and consider the immersion  $f_0\colon L\to X$. By definition, in some local Darboux neighborhood  near the singular point it is equivalent to a Lagrangian cone over $\phi$ in a ball $B_\eps$, $\eps>0$.
We use Lemma \ref{lm:near-cone} to construct a Hamiltonian regular homotopy supported in $B_\eps$ from $f_0$ to a new immersion $\wh f_0$ that coincides with the cone over  a loose knot  $\wt \phi$   near $\p B_{\eps/2}$ and which has exactly one transverse self-intersection point in $B_{\eps/2}$. If $n=2k $, we arrange  the index  to be of the same sign as $I(f_0)$.  
        
Let $\wt f_0$ be a Lagrangian immersion obtained from $\wh f_0$ by modifying it to the cone over $\wt\phi$ in $B_{\eps/2}$. We note that $|I(\wt f_0)|=d-1$, and by the induction hypothesis we find a Hamiltonian regular homotopy $\wt f_t$, $t\in[0,1]$,  fixed near the singularity and such that $\SI(\wt f_1)=|I(\wt f_0)|=d-1$.
 Note that the regular homotopy $\wt f_t$ is fixed in $B_\sigma$ for $\sigma\ll\frac\eps2$, but not necessarily fixed in $B_{\eps/2}$.
 
The required regular homotopy $f_t$ is then obtained by deforming $f_0$ into $\wh f_0$,
then scaling it inside $B_{\eps/2}$ to make it coincide with a  cone in $B_{\eps/2}- B_\sigma$, and finally deforming it outside $B_\sigma$, keeping it fixed in $B_\sigma$, using the Hamiltonian regular homotopy $\wt f_t$.
\end{proof}
 
\section{Proofs of the main results}

 \subsection{Proof of Theorem \ref{thm:min-intersect}}\label{ssec:proofmin-intersect}
Any   simple (i.e.~not double)  point $p\in f_0(L)$ can be viewed as a conical  point over a trivial Legendrian knot. Hence, we can apply
Lemma \ref{lm:near-cone} to find a Hamiltonian regular homotopy supported in a
Darboux neighborhood of $p$, symplectomorphic to $B_\eps$, such that the resulting Lagrangian immersion $\wt f_0$ coincides with a Lagrangian cone over a loose knot near $\p B_{\eps/2}$  and has  exactly one transverse self-intersection point in $B_{\eps/2}$, with index  equal to $(-1)^{k-1}$ if $n=2k$. Note that in all cases one then has $I(\wt f_0|_{L-\wt f_0^{-1}(B_{\eps/2})})=I(f_0)+(-1)^k$. Hence,  arguing as in the proof of Theorem \ref{cor:caps-immersed}, we replace $\wt f_0(L)\cap B_{\eps/2}$ by a cone over a loose Legendrian knot $\wt\phi$, and then  using Theorem \ref{cor:caps-immersed} we  construct a Hamiltonian regular homotopy $\wt f_t$ of the resulting immersion  $\wh f_0$, that is fixed in a neighborhood $B_\sigma$, $\sigma\ll\frac\eps2$, of the conical point, to an immersion with transverse self-intersections and with exactly $|I(f_0)+(-1)^{k}|$ double points. Finally 
    the required regular homotopy $f_t$ consists of first deforming $\wt f_0$ into $\wh f_0$, then scaling it inside $B_{\eps/2}$ in such a way that it becomes a cone in $B_{\eps/2}- B_\sigma$, and then deforming it outside $B_\sigma$ using the Hamiltonian regular homotopy $\wt f_t$ outside
 $B_\sigma$ and keeping it fixed in $B_\sigma$. The Lagrangian immersion $f_1$ is self-transverse and has exactly  
$$
|I(f_0)+(-1)^{\frac n2}|+1=
\begin{cases}
|I(f_0)|,&\text{if }(-1)^{\frac n2} I(f_0)<0,\\
|I(f_0)+2|,&\text{if } (-1)^{\frac n2}I(f_0)\geq0
\end{cases}
$$ 
double points.
\qed
 
\subsection{Proof of Corollary \ref{cor:immersion-Rn}}\label{ssec:proofimmersion-Rn}
Using Gromov's $h$-principle for Lagrangian immersions, see \cite{Gpdr}, we find an exact Lagrangian immersion  $f\colon L\to \R^{2n}_\st$ (in the given Lagrangian homotopy class $\sigma$ in cases (1) and (3)). Part (1) then follows from the corresponding case of Theorem \ref{thm:min-intersect}. 
If $n$ is even then $I({f_0})=(-1)^{\frac n2}\frac{\chi(L)}2$, see e.g.~\cite{EkEtSu}, [Proposition 3.2] and also part (3)  follows from the corresponding case of Theorem \ref{thm:min-intersect}.  To complete the proof of part (2), we observe that if $n=3$
then both smooth regular homotopy classes of immersions $f\colon L\to\R^{2n}_\st$, corresponding to $I({f})=0$ and $I({f})=1$, can be realized by a Lagrangian immersion.\qed

\subsection{Further results on $s(\sigma,L)$}\label{ssec:more-Rn}
If $n$ is even then the smooth regular homotopy class of a Lagrangian immersion $f\colon L\to\R^{2n}_\st$ is determined by $\chi(L)$, and thus Corollary \ref{cor:immersion-Rn} together with Gromov's non-existence result for exact Lagrangian embeddings gives complete information on $s(L)$ if $\chi(L)\le 0$. If $\chi(L)>0$, $s(L)$ depends on more intricate, differential topological, properties of $L$, see \cite{ES1,ES2} (Theorem \ref{thm:hmtpytype} gives information on the homotopy type of $L$ in this case). For odd $n$, $s(L)$ (and $s(L,\sigma)$) is determined by which of the two smooth regular homotopy classes contain Lagrangian (or equivalently totally real) immersions. The following result gives a partial answer. Recall that $\chi_2(L) = \sum_{j=0}^{\frac{n-1}2}\rk (H_j(L)) \mod 2$.

\begin{Theorem}     
If $L$ is an $n$-dimensional orientable closed manifold with $TL\otimes\C$ trivial, $n$ odd and $n\ge 3$, then the following hold:
\begin{enumerate}
\item If $n=3$ then both regular homotopy classes contain exact Lagrangian immersions.
\item If $n\ne 2^{q}-1$, $q\ne 2,3$ then only one regular homotopy class contains Lagrangian immersions. 
\item If $n=4k+1$ and $4k$ is not a power of two, or if the Stiefel-Whitney number  $w_2(L)w_{n-2}(L)$ vanishes, then only the regular homotopy class with Whitney index $I_f=\chi_2(L)$ contains Lagrangian immersions.
\item If $V$ is almost parallelizable then only the regular homotopy class with $I_{f}=\chi_2(V)$ contains Lagrangian immersions.
\end{enumerate}
\end{Theorem}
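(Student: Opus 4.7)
The plan is to translate the question via Gromov's $h$-principle for Lagrangian immersions and the Smale--Hirsch theorem into a homotopy-theoretic problem, and then reduce, via obstruction theory, to the classical Hopf-invariant-one problem. By Gromov, the set of Lagrangian regular homotopy classes of exact Lagrangian immersions $f\colon L\to\R^{2n}_\st$ is in natural bijection with the set $[L,U(n)]$ of complex trivializations of $TL\otimes\C$ up to homotopy; by Smale--Hirsch, smooth regular homotopy classes of immersions $L\to\R^{2n}$ correspond to $[L,V_n(\R^{2n})]$; and the underlying smooth class of a Lagrangian immersion is induced by the natural inclusion $U(n)\hookrightarrow V_n(\R^{2n})$, $u\mapsto(ue_1,\dots,ue_n)$. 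Hence the set of smooth regular homotopy classes containing a Lagrangian representative is exactly the image of
\[
\Phi_*\colon[L,U(n)]\to[L,V_n(\R^{2n})].
\]

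For $n$ odd and $n\geq 3$, the Stiefel manifold $V_n(\R^{2n})$ is $(n-1)$-connected with $\pi_n(V_n(\R^{2n}))=\Z/2$, so $[L,V_n(\R^{2n})]\cong H^n(L;\Z/2)=\Z/2$ for $L$ closed orientable, and the two smooth regular homotopy classes are distinguished by the Whitney index $I_f$. Restricting to the top cell obtained by pinching the $(n-1)$-skeleton of $L$, the image of $\Phi_*$ is controlled by the map $\pi_n(U(n))\to\pi_n(V_n(\R^{2n}))=\Z/2$: both smooth classes contain Lagrangian representatives if and only if this map is surjective. A classical computation using the fibration $O(n)\to O(2n)\to V_n(\R^{2n})$, the inclusion $U(n)\hookrightarrow O(2n)$, and Bott periodicity (which identifies $\pi_n(U(n))=\Z$ for $n$ odd) shows that surjectivity is equivalent to the existence of a Hopf-invariant-one element in $\pi_{2n-1}(S^n)$; by Adams' theorem such an element exists exactly for $n\in\{1,3,7\}$, which in the range $n$ odd $\geq 3$ amounts to $n=2^q-1$ with $q\in\{2,3\}$. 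This proves (1) (both classes realized for $n=3$) and (2) (only one class realized for odd $n\neq 3,7$).

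For (3) and (4) one must identify which class is realized in the non-exceptional dimensions. The tool is the Whitney self-intersection formula: for a self-transverse smooth immersion $f\colon L^n\to\R^{2n}$, the mod-$2$ self-intersection index $I_f$ is the sum of a characteristic number of $L$ and a secondary correction depending on the stable framing of $TL$. For a Lagrangian immersion, the normal bundle is canonically identified with $TL$ via $J$, so the correction can be rewritten purely in terms of Stiefel--Whitney numbers of $L$. In the almost parallelizable case (4) this correction vanishes because $TV$ is trivial off a point, and a direct Morse-theoretic count of critical points (sorted by index, using Poincaré duality) yields $I_f\equiv\chi_2(V)$. In case (3) the hypothesis $w_2(L)w_{n-2}(L)=0$ kills the relevant secondary Stiefel--Whitney product by a Wu-type argument, while under the alternative dimension hypothesis $n=4k+1$ with $4k$ not a power of $2$ an analogous computation in the stable $J$-theory of the classical groups removes the obstruction. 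In either scenario the correction term vanishes and one again obtains $I_f\equiv\chi_2(L)$.

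The main obstacle is the precise identification of $\pi_n(U(n))\to\pi_n(V_n(\R^{2n}))$ with a Hopf-invariant-one question, together with the characteristic-class cancellations underlying (3) and (4); both are within reach of standard techniques in the homotopy theory of classical groups and characteristic-class manipulations, but must be carried out carefully to match the geometric Whitney index of a Lagrangian immersion with the purely topological quantity $\chi_2(L)$.
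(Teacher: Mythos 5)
The paper's ``proof'' of this Theorem is a one-line citation: parts (1), (2), (4) are attributed to Audin \cite{Audin} and part (3) to Asadi-Golmankhaneh and Eccles \cite{AsaEcc}. Your proposal attempts to reconstruct the argument from scratch via the homotopy theory of classical groups, which is in the spirit of what those references do; but as written it contains a concrete error at the central step.

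You reduce parts (1)--(2) to the surjectivity of $\pi_n(U(n)) \to \pi_n(V_n(\R^{2n})) \cong \Z/2$, and assert that this is equivalent to the existence of a Hopf-invariant-one element in $\pi_{2n-1}(S^n)$, and hence, by Adams, holds exactly for $n \in \{1,3,7\}$. But Adams' theorem says $\pi_{2n-1}(S^n)$ contains an element of Hopf invariant one iff $n \in \{1,2,4,8\}$, not $\{1,3,7\}$. The set $\{1,3,7\}$ comes instead from the parallelizability of spheres, i.e.\ from Hopf invariant one in $\pi_{2n+1}(S^{n+1})$ with $n+1 \in \{2,4,8\}$. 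So either your identification of the relevant homotopy group is off by one dimension, or your invocation of Adams is wrong; as stated the two halves of the sentence are inconsistent, and the step needs to be reworked before the conclusion ``both classes are realized iff $n \in \{1,3,7\}$'' can be drawn.

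Two further issues: (a) the reduction from $[L,U(n)] \to [L,V_n(\R^{2n})]$ to the top-cell map $\pi_n(U(n)) \to \pi_n(V_n(\R^{2n}))$ is not automatic, since $U(n)\to V_n(\R^{2n})$ is very far from being $(n-1)$-connected (e.g.\ $\pi_1(U(n)) = \Z$ while the target is $(n-1)$-connected), so one must argue that the lower-skeletal part of a class in $[L,U(n)]$ contributes nothing to the resulting element of $H^n(L;\Z/2)$; and (b) your treatment of parts (3) and (4) is an outline of technique (``Wu-type argument'', ``stable $J$-theory computation'') rather than a proof. The entire content of case (3), which the paper delegates to \cite{AsaEcc}, is precisely this characteristic-number computation of the correction term, and it is not a routine cancellation; asserting that it ``removes the obstruction'' without carrying it out leaves the hardest part of the theorem unaddressed.
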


\begin{proof}
Cases (1), (2), and (4) are proved in \cite{Audin} and case (3) is a consequence of \cite{AsaEcc}.
\end{proof}

\subsection{Proof of Theorem \ref{thm:hmtpytype}}\label{ssec:proofhmtpytype}
We control the homotopy type of exact immersions $f\colon L\to\R^{2n}_\st$, $n$ even,  with exactly $\frac{1}{2}\chi(V)>0$ double points using a straightforward generalization of \cite[Lemma 2.2]{ES2}. We refer to \cite[Section 2]{ES2} for background and notation for the parts of (lifted) Legendrian homology that will be used below.  

Consider the Legendrian lift $\tilde f\colon L\to \R^{2n}_\st\times\R$. The Reeb chords of $\tilde f$ correspond to the double points of $f$, and the grading of all Reeb chords must be even since the sum of grading signs over Reeb chords equals $(-1)^{\frac{n}{2}}\chi(L)/2$, see \cite{EkEtSu}. Since no chord has odd grading it follows that the Legendrian algebra admits an augmentation, and since $L$ is spin we can use arbitrary coefficients in the Legendrian algebra.

Using the duality sequence for linearized Legendrian homology \cite{EESa} over $\Q$ we find that all odd dimensional homology of $L$ vanishes. In particular, the Maslov class vanishes and the linearized Legendrian homology admits an integer grading. The duality sequence with coefficients $\Z_{p}$ for arbitrary prime $p$ then implies that $L$ has only even dimensional homology over $\Z$.

We next claim that $\pi_1(L) = 1$. To this end, consider a connected covering space $\pi\colon \tilde L\to L$ of $L$. Then the ``lifted linearized Legendrian homology complex'' $\tilde{C}^{\mathrm{lin}}(\tilde L,\pi;k)$ with coefficients in the field $k$, see \cite[Section 2]{ES2}, has the form
\[
\tilde S\oplus \tilde C_{\mathrm{Morse}} \oplus \tilde L,
\]
where the elements in $\tilde S$ have grading $-1+2j$, $0\le j \le \frac{n}{2}$ and the elements in $\tilde L$ grading $2j+1$, $0\le j\le\frac{n}{2}$. Since $f(L)$ is Hamiltonian displaceable, the total homology of the complex vanishes. Since $\tilde{L}$ is connected, this in turn implies $\rk(\tilde S_{-1})=1$, where $\tilde S_{r}$ is the degree $r$ summand of $\tilde S$. Thus the covering $\pi$ has degree 1, and since the covering was arbitrary, that implies that $\pi_1(V) = 1$, as required.

Finally, if $\dim L > 4$ then we can use the Whitney trick to cancel homologically inessential handles.
\qed

\appendix
\section{Explicit constructions}\label{Sec:explicit}
In this section we consider explicit constructions of  Lagrangian immersions and Lagrangian regular homotopies. In Sections \ref{ssec:symplcoord} and \ref{ssec:frontslice} we introduce notation and some background material, which are necessary for the construction of a concrete Lagrangian immersion of $P=(S^{1}\times S^{n-1})\#(S^{1}\times S^{n-1})$ into $\R^{2n}_{\st}$. That construction, which is broken down into five stages, is given in the subsequent sections.  The construction generalizes  to dimensions $n\geq 3$ Sauvaget's construction from \cite{Sauvaget} for $n=2$.

\subsection{Symplectization coordinates}\label{ssec:symplcoord}
Consider $\R^{2n}_\st$ with coordinates $(x_1,y_1,\dots, x_n,y_n)$ and standard exact symplectic form $\omega = -d\beta$ with primitive $\beta=\sum_{j=1}^{n} y_j dx_j$. Let $\C^{n-1}\subset \R^{2n}_\st$ denote the subspace given by the equation $(x_n,y_n)=(0,0)$ and let $\beta_0=\beta|_{\C^{n-1}}$. Consider the contact manifold $\R\times\C^{n-1}$ with contact 1-form $\alpha=dz-\beta_0$, where $z$ is a coordinate in the additional $\R$-factor, and with symplectization $\R\times\R\times\C^{n-1}$ with exact symplectic form $d(e^{t}\alpha)$, where $t$ is the coordinate in the symplectization direction. Write $\xi=(x_1,\dots,x_{n-1})$ and $\eta=(y_1,\dots,y_{n-1})$, then $(\xi,\eta,x_n,y_n)$ are coordinates on $\R^{2n}_\st$. Consider the map $\Phi\colon \R\times\R\times\C^{n-1}\to\C^{n}$,
\[
\Phi(t,z,\xi,\eta)=(\xi,e^{t}\eta,e^t,z).
\]
Then
\[
\Phi^{\ast}(-\sum_{j}y_jdx_j)=-e^{t}(\eta\cdot d\xi +zdt)=e^{t}(dz-\eta\cdot d\xi)-d(e^{t}z)=\alpha-d(e^{t}z),
\] 
and hence $\Phi$ is an exact symplectomorphism from the symplectization $\R\times\R\times\C^{n-1}$ to $\C^{n}_{+}=\{(\xi,\eta,x_n,y_n)\colon x_n>0\}$. 

If $\Lambda\subset \R\times\C^{n-1}$ is a Legendrian submanifold then $\R\times\Lambda$ is an exact Lagrangian submanifold of the symplectization $\R\times\R\times\C^{n-1}$. If 
\[
(t,z(\lambda),\xi(\lambda),\eta(\lambda)),\quad \lambda\in\Lambda,\; t\in\R
\]
is a parameterization of $\R\times\Lambda$ then its image under $\Phi$ is parameterized by
\[
(\xi(\lambda),e^{t}\eta(\lambda),e^{t},z(\lambda))\in\C^{n}_{+}.
\]    
Conversely, if $L$ is a conical Lagrangian submanifold in $\C^{n-1}_{+}$ parameterized by
\[
(\xi(\lambda),s\eta(\lambda)),s,y_n(\lambda)),\quad \lambda\in\Lambda,\; s\in\R_{+},
\]
then the image of $L$ under $\Phi^{-1}$ is  the cylinder on a Legendrian submanifold $\Lambda\subset\R\times\C^{n-1}$ parameterized by $(z(\lambda),\xi(\lambda),\eta(\lambda))$, $\lambda\in\Lambda$, where $z(\lambda)=y_n(\lambda)$.

\subsection{Exact Lagrangian immersions by front slices}\label{ssec:frontslice}
Let $M$ be an $n$-manifold and let $f\colon M\to\C^{n}$ be an exact Lagrangian immersion. 
After small perturbation, the following general position properties hold:
\begin{enumerate}
\item All self-intersections of $f\colon M\to\C^{n}$ are transverse double points.
\item The coordinate function $x_n\circ f\colon M\to\R$ is a Morse function.
\item If $p=f(p_0)=f(p_1)$ is a double point of $f$ then $x_n(p)$ is a regular value of $x_{n}\circ f$. 
\end{enumerate}
Assume that $(1)-(3)$ hold. For any regular value $a$, the level set $M^{a}=(x_n\circ f)^{-1}(a)$ is a smooth $(n-1)$-manifold which is the boundary of the sublevel set $M^{\le a}=f^{-1}((-\infty,a])$, and if $\pi_n\colon \C^{n}\to \C^{n-1}$ is the projection that projects out $(x_n,y_n)$ then $f^{a}=\pi_n\circ f\colon M^{a}\to\C^{n-1}$ is an exact Lagrangian immersion. 

Double points of $f$ are also double points of $f^{a}$. In order to determine which double points of $f^{a}$ correspond to actual double points of $f$, we must recover the $y_{n}$-coordinate from $f^{a}$. Write $(x,y)=(\xi,\eta,x_n,y_n)$, where $\xi=(x_1,\dots,x_{n-1})$, $\eta=(y_{1},\dots,y_{n-1})$, and $(\xi,\eta)$ are standard coordinates on $\C^{n-1}$ as above. 
If $c$ is a double point of $f^{a}$, $c=f^{a}(c_0)=f^{a}(c_1)$ for $c_{0}\ne c_{1}\in M^{a}$ and if $\gamma$ is a path connecting $c_0$ to $c_1$ in $M^{a}$ then 
\[
y_n(c_1)-y_{n}(c_0)=\frac{d}{d x_n}\left(\int_{\gamma}\; \eta\, d\xi \right),
\]  
where $\eta\, d\xi=\sum_{j=1}^{n-1}y_j dx_j$. In other words, the $y_{n}$-coordinate at a double point is the $x_{n}$-derivative of the action of any path connecting its endpoints. Using the exactness of $f$, this can be rephrased as follows, if $z$ is a primitive of $f^{\ast}(\beta)$ and $z^{a}=z|_{M^{a}}$ then
\begin{equation}\label{eq:slicechord}
y_n(c_1)-y_{n}(c_0)=\frac{d}{d a}\left(z^{a}(c_1)-z^{a}(c_0)\right).
\end{equation}

In our constructions below we will depict the exact Lagrangian slices by drawing their fronts in $\R^{n-1}\times\R$. Before discussing slices, we give a brief general description of fronts. Let $N$ be a closed manifold and consider an exact Lagrangian immersion $g\colon N\to\C^{n}$. Pick a primitive $\zeta\colon N\to\R$ of $g^{\ast}(\beta)$. Then the map $G=(g,\zeta)\colon N\to\C^{n}\times\R$ is a Legendrian immersion, everywhere tangent to the contact plane field $\ker(d\zeta-\beta)$ on $\C^{n}\times\R$. The front of $g$ is the projection $\pi_{F}\circ G\colon N\to\R^{n}\times\R$, where $\pi_{F}$ projects out the $y$-coordinate. For generic $g$, the front has singularities; the front determines the original Lagrangian immersion via the equations
\[
y_j=\frac{\pa z}{\pa x_j},
\]  
which admit solutions that can be extended continuously over the singular set (caustic) of the front. For generic $g$, double points of $g$ lie off of the caustic and correspond to smooth points on the front with the same $x$-coordinate and with parallel tangent planes. The Reeb vector field of the contact form $dz-y\,dx$ on $\C^{n}\times\R$ is simply the coordinate vector field $\pa_{z}$ and thus double points of $g$ correspond to Reeb chords of $G$.

Below, we will study the fronts of exact Lagrangian immersions $f^{a}\colon M^{a}\to\C^{n-1}$ that are slices of a given Lagrangian immersion $f\colon M\to\C^{n}$, and as mentioned above it will be of importance to recover the $y_n$-coordinate at the double points of $f^{a}$. We will call the Reeb chords of the Legendrian lift of an exact slice $f^{a}$ \emph{slice Reeb chords}. Thus a slice Reeb chord is a vertical chord that connects two points on the front with parallel tangent planes. In our pictures below, we indicate the difference of the $y_n$-coordinate at the endpoints by showing whether the vertical chord that connects them grows or shrinks, cf.  Equation \eqref{eq:slicechord}.

\subsection{Overview: a  construction in five pieces}\label{ssec:onedouble}
The rest of the Appendix is devoted to the construction of a self-transverse exact Lagrangian immersion $P = (S^1 \times S^{n-1} \# S^1 \times S^{n-1}) \rightarrow \C^n$ with exactly one double point.  The construction of the immersion is broken down into five stages, two of which are described in terms of fronts, and three of which are described via Lagrangian slices.

In what follows, we  write $\xi=(x_1,\dots,x_{n-1})$ and $t=x_n$. Let $\eta=(y_1,\dots,y_{n-1})$ be dual to $\xi$ and $\tau=y_n$ dual to $t$. If $f(\cdot, t)$ is a function that depends on $t$, we write $f'(\cdot,t)$ for the partial derivative $\frac{\pa f}{\pa t}$.

The construction will be decomposed into the following five pieces: a bottom piece $(i)$ which will be drawn in Section \ref{ssec:1stpiece} as a sequence of fronts and which contains five critical slices (i.e.~slices containing critical points of the Morse function $x_n\colon P\to\R$), three middle pieces $(ii)-(iv)$, see Sections \ref{ssec:2ndpiece}, \ref{ssec:3rdpiece}, and \ref{ssec:4thpiece} all without critical slices where we will draw the corresponding Lagrangian immersions into $\C^{n}$, and finally a top piece $(v)$, also drawn by fronts, that contains the unique double point,  see Section \ref{ssec:5thpiece}.

\subsection{The first piece of the immersion}\label{ssec:1stpiece}
The first piece is constructed in four steps.
\subsubsection{}
We pass the minimum of the $t$-coordinate and a standard $(n-1)$-sphere appears as shown in Figure \ref{fig:bottom1}. It has one slice Reeb chord $c_1$ of length $\ell(c_1)$ and $\ell'(c_1)>0$.
\begin{figure}
\centering
\includegraphics[scale=0.4]{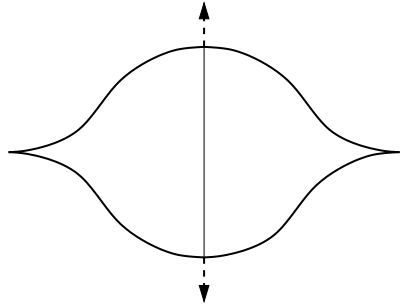} 
\caption{A standard sphere is born.}
\label{fig:bottom1}
\end{figure}

\subsubsection{}
We introduce two Bott families $\mathbf{c}_2$ and $\mathbf{c}_3$ of slice Reeb chords of index $0$ and $1$, respectively. Both families are topologically $(n-2)$-spheres, symmetric about the central slice Reeb chord. The lengths of the slice Reeb chords of the families are $\ell(\mathbf{c}_2)<\ell(\mathbf{c}_{3})$,  $\ell'(\mathbf{c}_{2})<0$, and $\ell'(\mathbf{c}_{3})<0$; see Figure \ref{fig:bottom2}. 
It will be important later that $\ell(\mathbf{c}_3)$ is not too small compared to $\ell(c_1)$. We introduce the following quantities corresponding to certain areas in the Lagrangian slice projections which appear later, but here related to the lengths of the slice Reeb chords:
\[
\ell(\mathbf{c}_{3})=C,\quad\ell(\mathbf{c}_2)=-B+C,\quad\ell(c_1)=A-B+C,
\] 
where $C>0$ and $A>B>0$. Then $C'<0$, $-B'+C'<0$, and $A'>B'-C'$. To be definite about $\ell(\mathbf{c}_3)$, we take $B=\frac{\mu_0}{5} C$ and $A=\frac{\mu_1}{2}C$, where $\mu_j$ are parameters such that $\mu_j\approx 1$. (Since the slice Reeb chords must change length with $t$, we cannot enforce $\mu_j=1$ but we can take these parameters to be approximately equal to $1$. ) 

\begin{figure}
\centering
\includegraphics[scale=0.45]{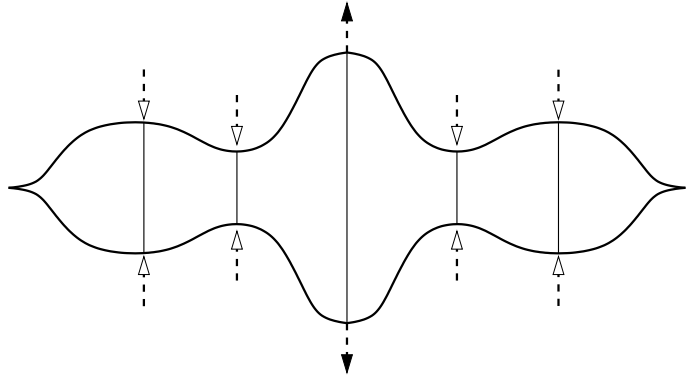} 
\caption{New born Bott families of shrinking chords.}
\label{fig:bottom2}
\end{figure}

\subsubsection{}
We Morse modify along the Bott family $\mathbf{c}_2$ of slice Reeb chords of minimal length twice, first  adding a family of 1-handles and then removing them, as shown in Figure \ref{fig:bottom3}. It is straightforward to check that the manifold that results from these modifications is a punctured connected sum $(S^{1}\times S^{n-1})\#(S^{1}\times S^{n-1})$, i.e., $P-D^{n}$.
The slice sphere that appears right after the second Morse modification is depicted in Figure \ref{fig:bottom4}.
\begin{figure}
\centering
\includegraphics[scale=0.3]{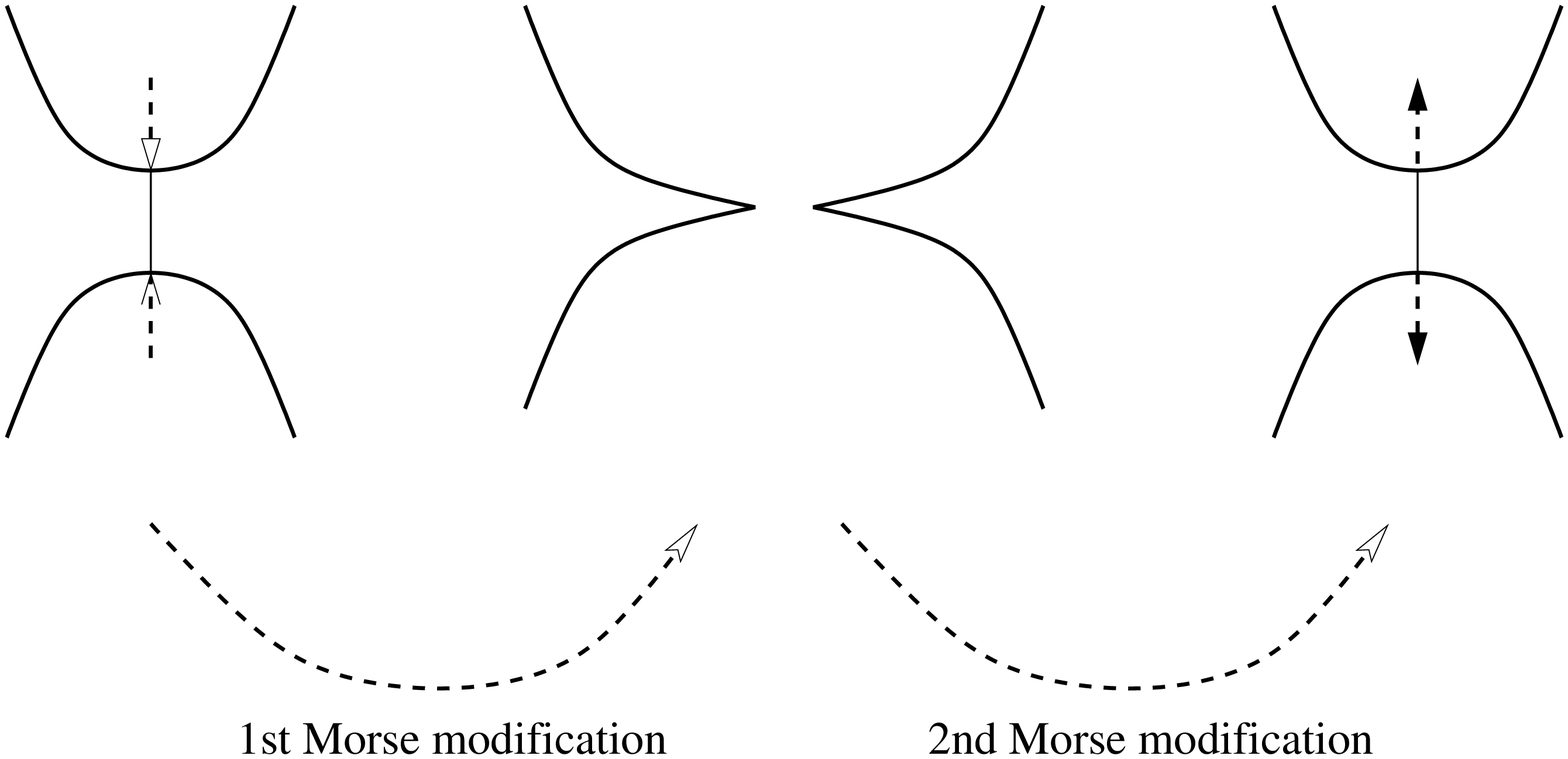} 
\caption{Morse modification along the Bott families of minima.}
\label{fig:bottom3}
\end{figure}

\subsubsection{}
In the final step of the first piece, we Morsify the Bott family $\mathbf{c}_2$ of slice Reeb chords. In doing so, as we shall see below, it is natural to think about the quantities $A$, $B$, and $C$ as functions of $(v,t)\in S^{n-2}\times\R$, where we think of $S^{n-2}$ as the unit sphere $\{\xi\in\R^{n-1}\colon |\xi|=1\}$, which naturally parameterizes the Bott manifold. We Morsify $\mathbf{c}_{2}$ leaving one very short chord $c_{2}^{0}$ with $\ell(c_{2}^{0})=\epsilon^{2}$ lying in direction $-v_0\in S^{n-2}$ and one long chord $c_{2}^{n-2}$ in direction $v_0 \in S^{n-2}$,  with 
\begin{equation}\label{eq:AandCclose}
\ell(c_{2}^{n-2})=-B(v_0,t)+C=(1-\frac{\mu_0}{5})C\approx\frac{4}{5}C,
\end{equation}
see Figure \ref{fig:bottom4}. Here, $\mu_0\approx 1$ is as above, and $C$ is constant in $v$ since we keep the Bott symmetry of $\mathbf{c}_3$.
The superscripts on $c_{2}^{0}$ and $c_{2}^{n-2}$ refer to the Morse indices of the slice Reeb chords, considered as positive function differences of the functions defining the two sheets of the front at their endpoints.  

Let $\phi = \phi_+ - \phi_-$ be such a positive difference between local functions defining the sheets of the front.  For future reference, we assume that the Morsification has only a small effect on the  level surfaces of $\phi$, which remain close to those of the Bott situation.  In particular,  the level curves of $\phi$ for values close to $\phi(c^{n-2}_{2})$ are everywhere transverse to the radial vectors along an $S^{n-2}$ slightly outside the former Bott-manifold, see Figure \ref{fig:levels}.  

\begin{figure}
\centering
\includegraphics[scale=0.4]{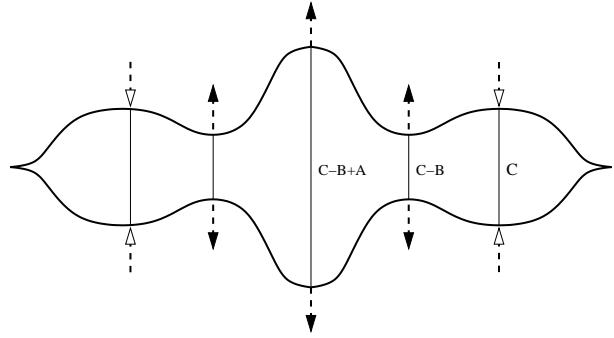} 
\caption{Final stage of the first piece.}
\label{fig:bottom4}
\end{figure}

\begin{figure}
\centering
\includegraphics[scale=0.5]{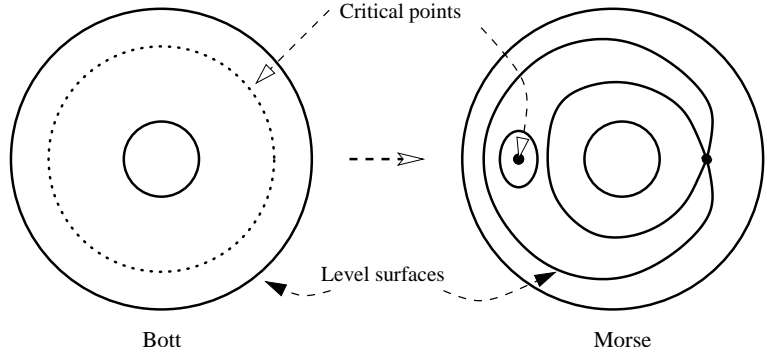} 
\caption{Level surfaces after Morsification.}
\label{fig:levels}
\end{figure}

\subsection{A guide to reading pictures of Lagrangian slices}
For the next three pieces of the immersion, we will not use the front representation as above, but will instead draw a family of exact Lagrangian slices. Before reaching the details we describe how to read the pictures.

Let $v\in S^{n-2}=\{\xi\in \R^{n-1}\colon |\xi|=1\}$ and let $r\in \R_{\ge 0}$. We construct our exact Lagrangian slice-spheres by drawing their slices in the $2$-dimensional half-planes determined by $v$ and its dual vector $w$, i.e. $w$ is a vector in $\eta$-space $\R^{n-1}$ dual to $v$ which lies in $\xi$-space. These slices are curves $\gamma_{v}\colon [0,1]\to\{rv+\rho w\colon r\ge 0, \rho\in\R\}$ that begin and end at a central point (the location of the slice Reeb chord of maximal length of the corresponding front) at $(0,0)=0\in\C^{n-1}$. In order for the curves $\gamma_{v}$ to close up and form a sphere, the integral $\int_{\gamma_{v}}\eta\, d\xi$ must be independent of $v\in S^{n-2}$. 

We will draw the immersed curves $\gamma_{v}$ with over/under information recording the value of the $\tau=y_n$-coordinate  at double points. It is also important to keep track of the values of $z$ at crossings, where $z$ is a primitive function of the exact Lagrangian slice. We write $(\Delta\tau)_j$ and $(\Delta z)_j$ for the differences in $\tau$-coordinate and $z$-coordinate, respectively, between the upper and the lower strands at the crossing labeled $j$ in figures. Note that with these conventions $(\Delta z)_j\ge 0$.

We next need a description of the actual double points of the slice immersions $f^{a}$ in terms of the curves $\gamma_{v}$. Note first that any double point of $f^{a}$ corresponds to a double point of some curve $\gamma_{v}$. Although we break the $S^{n-2}$ symmetry we stay fairly close to a symmetric situation. In particular, the double points of the curves $\gamma_{v}$ (with the exception of the central double point) will come in $S^{n-2}$-families and the $z$-differences $(\Delta z)_j$ then give functions
\[
(\Delta z)_j\colon S^{n-2}\to\R_{\geq 0}.
\] 
Since double points correspond to parallel tangent planes on the front we find that the double points of $f^{a}$ are exactly the critical points of these functions.

\subsection{The second piece of the immersion}\label{ssec:2ndpiece}
We will represent the second piece  of the immersion in three steps. 

\subsubsection{}
In the initial step the curves correspond to the front in Figure \ref{fig:bottom4}, which is depicted in the slice model in Figure \ref{fig:lag1}. 
 $A$, $B$, and $C$ denote the (positively oriented) areas indicated; each is a function of $v$ and $t$. We then have
\begin{alignat*}{2}
&(\Delta z)_1(t) = A-B+C, &\qquad &(\Delta\tau)_{1}=(A-B+C)'>0,\\
&(\Delta z)_2(v,t) = -B+C, &\qquad &(\Delta\tau)_{2}=(-B+C)'>0,\\
&(\Delta z)_3(v,t) = -C, &\qquad &(\Delta\tau)_{3}=(-C)'>0.
\end{alignat*}
 In particular $A(-v_0,t)>B(-v_0,t)=C(-v_0,t)-\epsilon^{2}$ and, for fixed $t$, the functions $A(v,t)$ and $B(v,t)$ have maxima at $-v_{0}$ and minima at $v_0$, whereas the function $C(v,t)$ is constant in $v$.  
\begin{figure}
\centering
\includegraphics[scale=0.5]{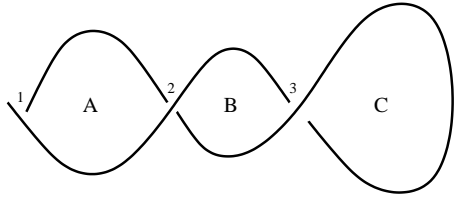} 
\caption{Initial Lagrangian slice.}
\label{fig:lag1}
\end{figure}

\subsubsection{}
We apply a finger move across the area $B(v,t)$, splitting it into two pieces $B_0(v,t)$ and $D(v,t)$ as shown in Figure \ref{fig:lag2}, where $B_0(v,t)$ is constant in $v$: 
\[
B_0(v,t)=B_0(v_0,t)=D(v_0,t).
\] 
Consequently, $D(v,t)$ equals $B(v,t)-\const$.

The crossing conditions then read:
\begin{alignat*}{2}
&(\Delta z)_1 = A-B_0+C-D, &\qquad &(\Delta\tau)_{1}=(A-B_0+C-D)'>0,\\
&(\Delta z)_2 = -B_0+C-D, &\qquad &(\Delta\tau)_{2}=(-B_0+C-D)'>0,\\
&(\Delta z)_3 = -C, &\qquad &(\Delta\tau)_{3}=(-C)'>0,\\
&(\Delta z)_4 = -D, &\qquad &(\Delta\tau)_{4}=(-D)'>0.
\end{alignat*}
\begin{figure}
\centering
\includegraphics[scale=0.6]{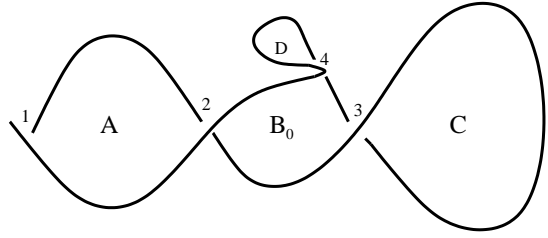} 
\caption{Applying a finger move.}
\label{fig:lag2}
\end{figure}

\subsubsection{}
We continue the finger move and introduce a new small area $E(v,t)$, as shown in Figure \ref{fig:lag3}. We have
\begin{alignat}{2}\label{eq:initial1}
&(\Delta z)_1 = A-B_0+C-D+E, &\qquad &(\Delta\tau)_{1}=(A-B_0+C-D+E)'>0,\\\label{eq:initial2}
&(\Delta z)_2 = -B_0+C-D+E, &\qquad &(\Delta\tau)_{2}=(-B_0+C-D+E)'>0,\\\label{eq:initial3}
&(\Delta z)_3 = -C, &\qquad &(\Delta\tau)_{3}=(-C)'>0,\\\label{eq:initial4}
&(\Delta z)_4 = -D+E, &\qquad &(\Delta\tau)_{4}=(-D+E)'>0,\\\label{eq:initial5}
&(\Delta z)_5 = -D, &\qquad &(\Delta\tau)_{5}=(-D)'>0.
\end{alignat}
\begin{figure}
\centering
\includegraphics[scale=0.6]{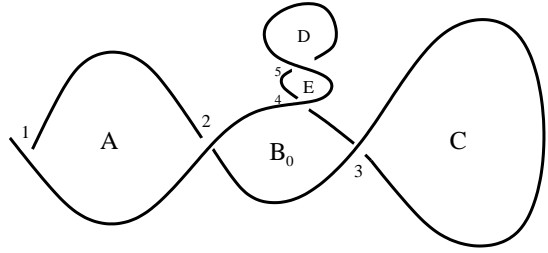} 
\caption{Final step of lower middle piece. The newly introduced area $E(v,t)$ is everywhere small at this stage.}
\label{fig:lag3}
\end{figure}

\subsection{The third piece of the immersion}\label{ssec:3rdpiece}
The slices of the third piece will be drawn in the same way as the slices in Section \ref{ssec:2ndpiece}. We will distinguish small and large deformations, and use $\approx_{t}$ for quantities that are almost constant in time, and $\approx_{(v,t)}$ for those that are almost constant in both $v\in S^{n-2}$ and in time. Recall that we wrote $\epsilon^{2}$ for the height of the smallest Reeb chord, where $0<\epsilon\ll 1$; by  a small deformation we mean one smaller than  $\epsilon^{p}$ for $p\gg 2$. For example $C(v,t)\approx_{(v,t)}\const$ means that the area $C(v,t)$ is independent of $v$ and that it has  $t$-derivative of order $\epsilon^p$, $p\gg 2$, with the sign of the derivative dictated by the crossing conditions. 

\subsubsection{}
In the initial phase of the third piece we change the area functions in such a way that the following conditions are met:
\[
A(v,t)+E(v,t)\approx_{(v,t)}\const,\; B_0(v,t)\approx_{(v,t)}\const,\; C(v,t)\approx_{(v,t)}\const,\; D(v,t)\approx_{t} D(v,t'), 
\]
where $t'$ is the starting time.
Consider $v_0\in S^{n-2}$ as above. At $-v_0$, the values of all area functions stay close to their initial values. 
At $v_0$, $A(v,t)$ shrinks toward $0$ and $E(v,t)$ grows correspondingly. We choose these functions so that they have exactly two critical points $\pm v_0$. Note that these deformations are compatible with \eqref{eq:initial1}--\eqref{eq:initial5}. At the point when $A(v_0,t)=0$ we find that the central slice Reeb chord which corresponds to a maximum of the function difference determined by the two sheets of the front and to the double point labeled $1$ cancels with the chord $c_{2}^{n-2}$ corresponding to the largest value of $(\Delta z)_2$. The curve $\gamma_{v_0}$ 
at the cancelling moment is depicted in Figure \ref{fig:noninv1}.
\begin{figure}
\centering
\includegraphics[scale=0.5]{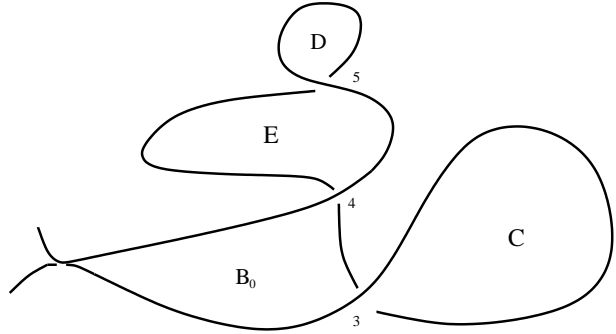} 
\caption{The cancelling moment.}
\label{fig:noninv1}
\end{figure}
Note that at this point the slice Reeb chord that corresponds to the double point labeled $4$ satisfies the following:
\begin{equation}\label{eq:rightsize}
(\Delta z)_{4}(v,t)<E(v_0,t)-D(v_0,t)\approx_{t} A(v_0,t') - D(v_0,t) < C(v,t),
\end{equation} 
where we recall that $C(v,t)\approx_{(v,t)}\const$ and that $t'$ denotes the initial instance, before we start shrinking $A(v,t)$, see \emph{$(i)$ Step 4}. In particular, the Reeb chord of maximal length is the chord labeled $3$ (denoted $\mathbf{c}_{3}$ earlier) of length $C(v,t)$. 

\subsubsection{}
In the central region where the Reeb chords cancel, the front of the Lagrangian slice consists of two function graphs. The level sets of the positive difference of these function graphs are shown in Figure \ref{fig:levelcancel}. 
\begin{figure}
\centering
\includegraphics[scale=0.4]{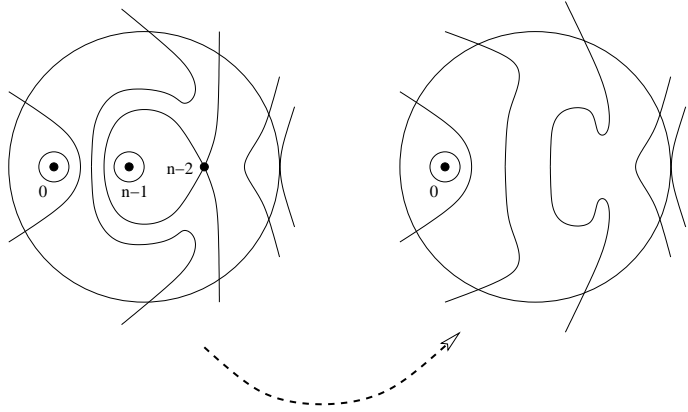} 
\caption{Level sets in the central region near the moment of cancellation.}
\label{fig:levelcancel}
\end{figure}

Recall the consequence of the earlier Bott set up, see \emph{$(i)$ Step 4}, that the level sets are transverse to the radial vectors along an $S^{n-2}$ surrounding the central region. Thus isotoping level sets keeping them fixed along the boundary, we see that there exists a Hamiltonian isotopy which is fixed outside the central region that deforms the Lagrangian so that the central region appears as shown in Figure \ref{fig:hamilton1}. Here the slice Reeb chord $c_{2}^{0}$ is the new central Reeb chord and level sets are everywhere transverse to the radial vector field. Noting that double points in the central region of the radial slices $\gamma_v$ correspond to tangencies of the level sets and the radial vector field, we find that after this Hamiltonian deformation, the curves $\gamma_{v}$ are as shown in Figure \ref{fig:hamilton2}. We take the slices of our exact Lagrangian to be approximately equal to the instances of this Hamiltonian deformation,  deviating from it slightly in order to ensure that the crossing conditions hold. 
\begin{figure}
\centering
\includegraphics[scale=0.4]{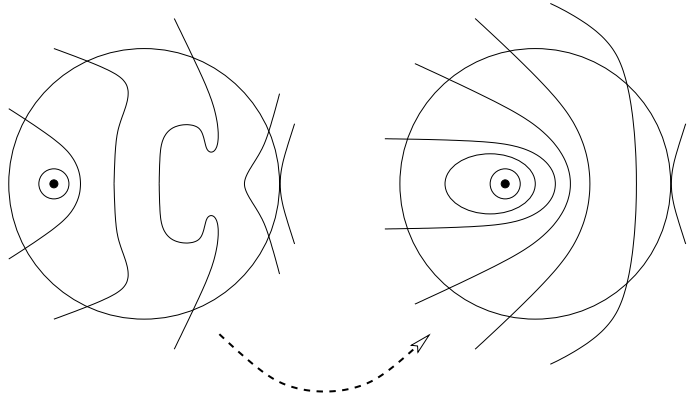} 
\caption{Placing the minimal length Reeb chord in central position by an (almost) Hamiltonian deformation.}
\label{fig:hamilton1}
\end{figure}

\begin{figure}
\centering
\includegraphics[scale=0.6]{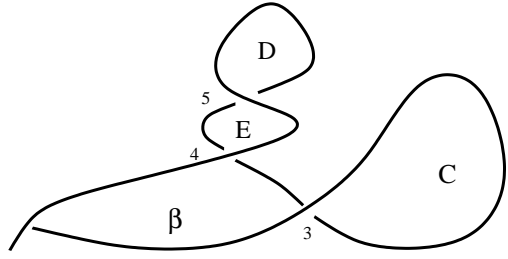} 
\caption{Appearance of the curves $\gamma_{v}$ after $c_{2}^{0}$ became the new central Reeb chord.}
\label{fig:hamilton2}
\end{figure}

We have the following crossing conditions:
\begin{alignat}{2}\label{eq:new1}
&\epsilon^{2} \approx -\beta+C-D+E, &\qquad &(\Delta\tau)_{0}=(-\beta+C-D+E)'>0,\\\label{eq:new2}
&(\Delta z)_3 = -C, &\qquad &(\Delta\tau)_{3}=(-C)'>0,\\\label{eq:new3}
&(\Delta z)_4 = -D+E, &\qquad &(\Delta\tau)_{4}=(-D+E)'>0,\\\label{eq:new4}
&(\Delta z)_5 = -D, &\qquad &(\Delta\tau)_{5}=(-D)'>0.
\end{alignat}
For the deformation described above, the function $\beta(v,t)$ in Figure \ref{fig:hamilton2} has only two critical points, with maximum value at $v_0$: 
\begin{equation} \label{maxbeta}
\beta(v_0,t)\approx A(-v_0,t) + B_0(v_0,t) > C(v,t),
\end{equation}
and minimum value at $-v_0$:
\begin{equation} \label{minbeta}
\beta(-v_0,t)\approx B_{0}(-v_0,t)< C(v,t).
\end{equation}

\subsection{The fourth piece of the immersion}\label{ssec:4thpiece}
We describe the fourth piece in four steps.

\subsubsection{} We rotate the ends of the curves $\gamma_{v}$ as shown in Figure \ref{fig:rot}. More precisely, recall that $(r,\rho)\mapsto rv+\rho w$ are coordinates on the half plane in which $\gamma_{v}$ lies.  Consider an interval  $[0,R]$ and $r_0\in[0,R]$ small, such that (i) all curves $\gamma_{v}$ are of standard form in $\{(r,\rho)\colon 0\le r\le 2r_0\}$, and (ii)  all curves $\gamma_{v}$ are contained in $\{(r,\rho)\colon 0\le r\le \frac12 R\}$. Consider a Hamiltonian deformation $\psi_{t}$ that is constant in $\{(r,\rho)\colon 0\le r\le r_0\}$, that is a $\frac{\pi}{2}$ rotation in the region $\{(r,\rho)\colon 2r_0\le r\le \frac12R\}$, and that is  again constant in $\{(r,\rho)\colon \frac34R\le r\}$. The curves $\gamma_{v}$ are then given by small deformations of the curves $\psi_{t}\circ\gamma_{v}$; the small deformations shrink the areas $C(v,t)$ and $D(v,t)$, and increase the area $\beta(v,t)$, in each case with very small derivative, in order to have the crossing conditions satisfied. 
\begin{figure}
\centering
\includegraphics[scale=0.5]{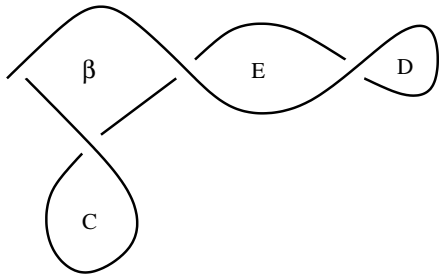} 
\caption{A rotation has been applied.}
\label{fig:rot}
\end{figure}

\subsubsection{} We shrink the area $C(v,t)$ for $v$ in a certain subset of $S^{n-2}$. Recall from Equations \eqref{maxbeta},\eqref{minbeta} that the minimum of the function $\beta(v,t)$ is $\beta(-v_0,t)\approx B_0(-v_0,t)=B(v_0,t)< C(v,t)$ whereas its maximum $\beta(v_0,t)\approx B(v_0,t)+A(-v_0,t)>C(v,t)$. Consider the subset $Q\subset S^{n-2}$ where 
\[
\beta(v,t)\ge C(v,t)-\frac12\epsilon. 
\]
Over $Q$ we shrink $C(v,t)$ and $\beta(v,t)$ in such a way that $C(v,t)-\beta(v,t)\approx \const$ until $C(v,t)=\epsilon$. Outside a neighborhood of $Q$ we keep $C(v,t)$ at its original size. Note that outside $Q$ we have
\begin{equation}\label{eq:EleD}
E(v,t)= D(v,t)+\beta(v,t)-C(v,t)+\epsilon^{2}\le D(v,t) -\tfrac12\epsilon +\epsilon^{2} < D(v,t).
\end{equation}

\subsubsection{} We slide $C(v,t)$ out across $E(v,t)$ without creating double points. To see that this is possible we subdivide into three cases. First, around $Q$, $C(v,t)$ is small (size $\epsilon$) compared to $E(v,t)$, which is of (order $1$) finite size. In this case there are two double points created when $C(v,t)$ enters and leaves $E(v,t)$, see Figure \ref{fig:Csmall}. The crossing conditions at the entering crossings read
\[
(\Delta\tau)_{\mathrm{en}}=(\tilde C(v,t)-D(v,t)+E(v,t))'>0,
\]
and those at the exiting crossings
\[
(\Delta\tau)_{\mathrm{ex}}=(\tilde C(v,t)-D(v,t))'>0,
\]
where $0<\tilde C(v,t)<C(v,t)=\epsilon$. Here we let $E(v,t)\approx\const$ and we let $D(v,t)$ shrink to keep the above derivative positive. As the total variation of $\tilde C(v,t)$ (i.e.~the total amount of area transported in and out of $E(v,t)$) is $2\epsilon$ and since $D(v,t)>2\epsilon$, $D(v,t)$ is sufficiently large to keep the derivative positive at all times.

\begin{figure}
\centering
\includegraphics[scale=0.6]{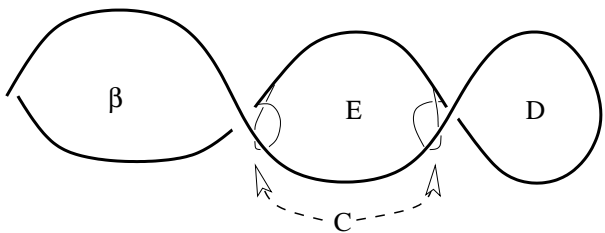} 
\caption{Sliding a small area $C(v,t)$.}
\label{fig:Csmall}
\end{figure}

Second, well outside a neighborhood of $Q$, $C(v,t)$ is large and there are two double points throughout the deformation, see Figure \ref{fig:Cbig}, where the crossing conditions read
\begin{align*}
(\Delta\tau)_{6}&=(\tilde C(v,t)-D(v,t)+\tilde E(v,t))'>0,\\
(\Delta\tau)_{7}&=(-D(v,t)+\tilde E(v,t))'>0,\\
\end{align*}
where $\tilde C(v,t)\approx C(v,t)-\const$ and where $0\le \tilde E(v,t)\le E(v,t)$. Here $\tilde E'(v,t)<0$ and we ensure that the crossing conditions hold by shrinking $D(v,t)$. Since the total variation of $\tilde E(v,t)$ is at most the area $E(v,t)$ and $D(v,t)> E(v,t)$ in this region, we find that the crossing condition is met at all times. 

Third, in the region where we interpolate between small and large $C(v,t)$, $E(v,t)$ is already smaller than $D(v,t)$, and the amount of area transported through $E(v,t)$ is smaller than the corresponding amount described above; hence  the necessary crossing conditions can be arranged by shrinking $D(v,t)$. In conclusion we can thus slide $C(v,t)$ out for all $v$.

\begin{figure}
\centering
\includegraphics[scale=0.6]{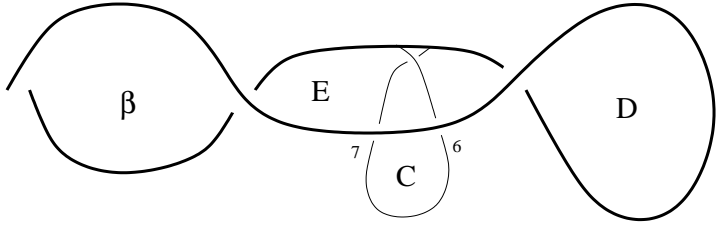} 
\caption{Sliding a large area $C(v,t)$.}
\label{fig:Cbig}
\end{figure}

\subsubsection{} The final stage of the fourth piece is shown as a Lagrangian slice in Figure \ref{fig:finalmid} and as the corresponding front in Figure \ref{fig:top1}. The latter serves as the starting point of the last piece.

\begin{figure}
\centering
\includegraphics[width=0.7\linewidth, angle=0]{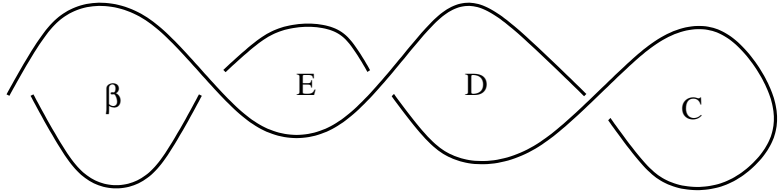} 
\caption{Final shape of the curves $\gamma_{v}$.}
\label{fig:finalmid}
\end{figure}

\begin{figure}
\centering
\includegraphics[scale=0.5]{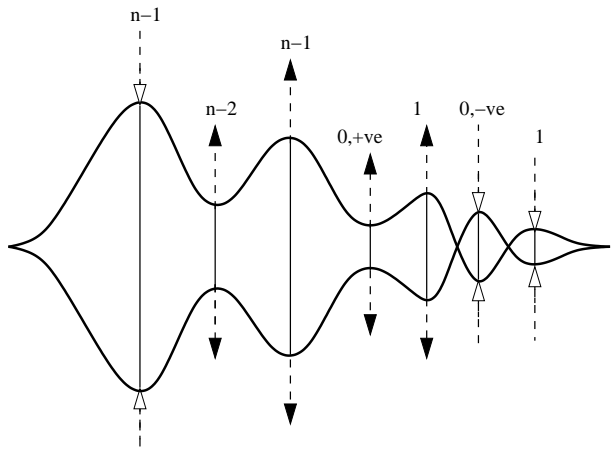} 
\caption{Initial position for the fifth piece.}
\label{fig:top1}
\end{figure}

\subsection{The fifth piece of the immersion}\label{ssec:5thpiece}
For the fifth, final piece of the immersion,  we return to the front representation.
The initial front of the fifth piece is a sphere with seven Reeb chords, see Figure \ref{fig:top1}, three that shrink and four that grow. It is the double of a function graph with critical points as follows: (i) the largest maximum is positive and shrinking; (ii) there is another positive local maximum that increases; (iii) one positive index $n-2$ point  increases; (iv) there are  two positive index $1$ chords,  one increasing and one decreasing;  (v) and there are two index $0$ chords, one positive increasing and one negative decreasing.

We pass the decreasing index $1$ chord through a double point,  making it increasing, see Figure \ref{fig:top2}. Then we cancel the two index $(0,1)$ pairs and the index $(n-1,n-2)$ pair of increasing chords. This leaves us with a standard sphere, with a decreasing slice Reeb chord corresponding to a maximum, that we cap off with a Lagrangian disk via a standard Morse modification; see Figure \ref{fig:top4}. 
This completes the construction of  the desired exact Lagrangian immersion $P\to\C^{n}$ with one double point. 
  
\begin{figure}
\centering
\includegraphics[width=0.7\linewidth, angle=0]{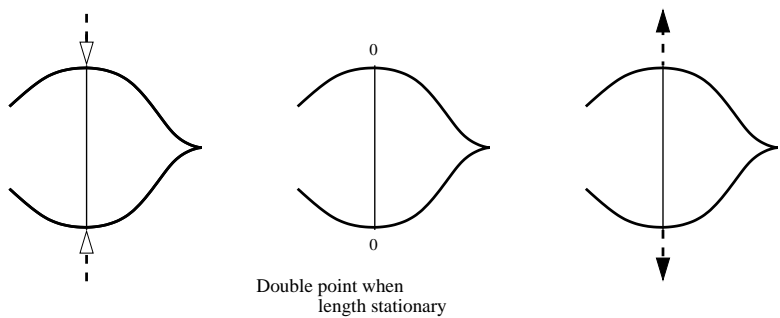} 
\caption{Slices passing through the double point.}
\label{fig:top2}
\end{figure}

\begin{figure}
\centering
\includegraphics[scale=0.4]{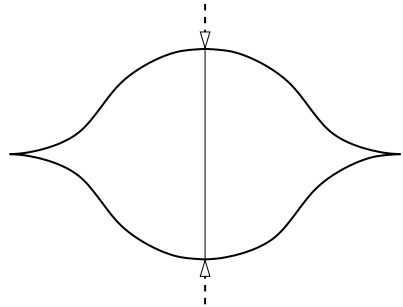} 
\caption{The final slice can be capped off with a maximum of $x_n$.
\label{fig:top4}}
\end{figure}

\end{document}